\newtheorem{theorem}{Theorem}[section]
\newtheorem{proposition}[theorem]{Proposition}
\newtheorem{fact}[theorem]{Fact}
\theoremstyle{definition}
\theoremstyle{remark}
\def\R{{\mathbb R}}
\def\N{{\mathbb N}}
\newcommand{\xR}{{]}{-\infty},+\infty]}
\newcommand{\Rex}{\xR}
\newcommand{\Rb}{\overline{\R}}
\newcommand{\Fcal}{\mathcal{F}}
\newcommand{\Lsc}{{\rm Lsc}}
\newcommand{\LC}{{\rm LC}}
\newcommand{\LACG}{{\rm LACG}}
\newcommand{\CLACG}{{\rm CLACG}}
\newcommand{\LL}{{\rm LL}}
\newcommand{\AUSS}{\LACG_*^{\natural{a}}}
\newcommand{\AFUSS}{\LACG^{\natural{a}}}
\newcommand{\NFUSS}{\LC^{\natural{n}}}
\newcommand{\gu}{g^{\natural}}
\newcommand{\fu}{f^{\natural}}
\newcommand{\gsu}{g^{\natural\natural}}
\newcommand{\fsu}{f^{\natural\natural}}
\newcommand{\ps}{\smallbreak}
\newcommand{\del}{\partial}
\newcommand{\delc}{\widehat{\del}}
\newcommand{\dom} {{\rm dom} \kern.15em}
\newcommand{\tq}{:}
\newcommand{\la}{\langle}
\newcommand{\ra}{\rangle}
\newcommand{\eps}{\varepsilon}
\newcommand{\bx}{\bar{x}}
\newcommand{\xb}{\bar{x}}
\newcommand{\yb}{\bar{y}}
\newcommand{\tow}{{\stackrel{w^*}{\longrightarrow}}\;}
\begin{document}
\thispagestyle{empty}
\begin{center}
{\large\bf\sc
Links between functions and subdifferentials}
\medskip\\
\today
\end{center}

\begin{center}
  {\small\begin{tabular}{c}
  Marc Lassonde\\
   Universit\'e des Antilles, Pointe \`a Pitre, and LIMOS, 
   Clermont-Ferrand, France\\
  E-mail: marc.lassonde@univ-antilles.fr
  \end{tabular}}
\end{center}

\medbreak\noindent
\textbf{Abstract.}
A function  in a class $\mathcal{F}(X)$
is said to be subdifferentially determined in $\mathcal{F}(X)$
if it is equal up to an additive constant
to any function in $\mathcal{F}(X)$ with the same subdifferential.
A function is said to be subdifferentially representable
if it can be recovered from a subdifferential.
We identify large classes of lower semicontinuous functions
that possess these properties.
\medbreak\noindent
\textbf{Keywords:}
  Dini derivative, ACG$_*$ function,
  Henstock-Kurzweil integral,
  radial subderivative,
  subdifferential determination, subdifferential representation.
  
\medbreak\noindent
\textbf{2010 Mathematics  Subject Classification:}
  49J52, 26A39, 26B25.
\section{Introduction}\label{intro}
In this paper we are interested in two fundamental links between functions and subdifferentials:
the subdifferential determination and the
subdifferential representation of functions.
A function  in a class $\Fcal(X)$
is said to be \textit{subdifferentially determined} in $\Fcal(X)$
if it is equal up to an additive constant
to any function in $\Fcal(X)$ with the same subdifferential.
A function is said to be \textit{subdifferentially representable}
if it can be expressed in terms of a subdifferential, or put another way,
if it can be recovered from a subdifferential.

The subdifferential determination of
extended-real-valued lower semicontinuous convex functions
defined on Hilbert spaces was brought to light by Moreau \cite{Mor65}.
His result was later extended to general Banach spaces by
Rockafellar \cite{Roc70b}.
In the non-convex case, the first works are due to Rockafellar \cite{Roc82}
for the class of (Clarke) regular locally Lipschitz functions,
and to Poliquin \cite{Pol91} for the class of primal lower nice functions
with possibly extended-real values.
Since then, this property has been considered for various classes
of functions;
let us mention the works of Correa-Jofré \cite{CJ89},
Qi \cite{Qi89,Qi90}, Birge-Qi \cite{BQ93},
Thibault-Zagrodny \cite{TZ95,TZ05,TZ10},
Borwein-Moors \cite{BM98b},
Thibault-Zlateva \cite{TZla05}, Bernard-Thibault-Zagrodny \cite{BTZ05},
Zaj{\'{\i}}{\v{c}}ek \cite{Zaj12} and our recent work \cite{Las18b}.
The subdifferential representation of extended-real-valued
lower semicontinuous convex functions defined on a Banach space
was established by Rockafellar \cite{Roc70b};
different proofs of this result are proposed by Taylor \cite{Tay73},
Thibault \cite{Thi00} and Ivanov-Zlateva \cite{IV08},
and a refined version by Benoist-Daniilidis \cite{BD05}.
Few results exist for non-convex functions;
let us mention Qi \cite{Qi89} and Birge-Qi \cite{BQ93}.
\smallbreak
In the present article we study the subdifferential determination
and the subdifferential representation properties
with respect to an abstract subdifferential. 
Our abstract subdifferential recovers the Clarke, the Michel-Penot
and the Ioffe subdifferentials in any Banach space, and 
the elementary subdifferentials (proximal, Fr\'echet, Hadamard, \ldots),
as well as their viscosity and limiting versions,
in appropriate Banach spaces.
The subdifferential determination property is considered
for different classes of functions lying between the class $\LL(X)$
of locally Lipschitz functions and the class $\Lsc(X)$ of
extended-real-valued lower semicontinuous functions.
The subclasses of subdifferentially determined functions are
defined according to the continuity properties of their lower Dini derivative, independently of the
subdifferential.
In this introduction we give a brief overview of the
above-mentioned works and their connections with the present work.

The class $\LL(X)$ of locally Lipschitz functions
is the natural framework for many applications in
nonsmooth analysis and optimization.
However, further assumptions are in general necessary
to get sharpened results.
The most widely used  assumptions are \textit{regularity}
and \textit{semismoothness}:
the regularity introduced by Clarke \cite{Cla83} makes it possible
to obtain equality in the calculus rules for the (Clarke)
subdifferential, the semismoothness proposed by Mifflin
\cite{Mif77a,Mif77b} allows implementable algorithms for nonsmooth
constrained optimization. The common feature of these assumptions is
the postulation of a relationship between the lower Dini derivative
(hereinafter called the \textit{radial subderivative})
of $f$ at $\bx$,
\begin{align*}
f^r(\xb;u)&:=\liminf_{t\searrow 0}\,\frac{f(\xb+tu)-f(\xb)}{t},
\end{align*}
and the \textit{Clarke subdifferential} of $f$ at $\bx$,
\begin{gather*}
\partial_{C} f(\bx) := \{x^* \in X^* \tq \langle x^*,u\rangle \leq
f^\circ(\bx;u), \, \forall u \in X\},
\end{gather*}
where
\begin{align*}
f^\circ(\bx;u)&:=  
\limsup_{\substack{t \searrow 0\\x \to\bx}}\frac{f(x+tu) -f(x)}{t}.
\end{align*}
More precisely, a locally Lipschitz function $f:X\to \R$ is called
\textit{regular at a point $\xb$} provided for every $u\in X$,
\begin{equation}\label{intro1} 
f^r(\xb; u)=\max\{\la x^*,\xb\ra\tq x^*\in\partial_Cf(\xb)\}
(=f^\circ(\xb;u))
\end{equation}
and \textit{semismooth at $\xb$} provided for every $u\in X$,
\begin{equation}\label{intro2} 
x_n \to_u \xb \text{ and } x^*_n \in \partial_Cf(x_n) \Rightarrow
\la x^*_n,u\ra \to f^r(\xb; u),
\end{equation}
where $x_n \to_u \xb$ indicates that $x_n=\xb + t_n u_n$ with
$t_n\searrow 0$ and $u_n\to u$.
These two properties are independent: there are regular functions that are not semismooth, and semismooth functions that are not regular. The functions that are both regular and semismooth are characterized
by the submonotonicity of their (Clarke) subdifferential;
lower-$C_1$ functions are examples of such functions
(see Spingarn \cite{Spi81}).
It is well known that the properties of regularity and semismoothnes
can be reformulated in terms of the continuity of the radial subderivative;
indeed, (\ref{intro1}) and (\ref{intro2}) are respectively equivalent to
\begin{equation}\label{intro1b} 
f^r(\xb; u)=\limsup_{x\to \xb} f^r(x; u),\tag{\ref{intro1}b}
\end{equation}
\begin{equation}\label{intro2b} 
f^r(\xb; u)=\lim_{x\to_u \xb} f^r(x; u)\tag{\ref{intro2}b}
\end{equation}
(see for example \cite{Roc82} and \cite{CJ89}).
There is one significant difference between the formulas
(\ref{intro1b})-(\ref{intro2b}) and
the definitions (\ref{intro1})-(\ref{intro2}), namely,
the former can be extended beyond the Clarke subdifferential.
In this paper we will consider a property that generalizes both:
a locally Lipschitz function $f$ is said to be
\textit{upper semismooth at a point $\xb$, in the direction $u$}
\cite{Las18b}, if
\begin{equation}\label{intro3} 
f^r(\xb; u)=\limsup_{x\to_u \xb} f^r(x; u).
\end{equation}

Two stronger notions are naturally associated with
the Clarke subdifferential of locally Lipschitz functions.
We say that a locally Lipschitz function $f$ is \textit{strictly differentiable at $\xb$, in the direction $u$}
\cite{BM98a}, if
\begin{align*}
\lim_{\substack{t \searrow 0\\x \to\bx}}\frac{f(x+tu) -f(x)}{t}
\text{ exists};
\end{align*}
equivalently (see the proof of \cite[Proposition 2.2.4]{Cla83}),
\begin{align*}
f^\circ(\xb;u)=f^r(\xb;u)=-f^\circ(\xb;-u).
\end{align*}
We say that $f$ is \textit{strictly differentiable at $\xb$}, if $f$ is strictly differentiable at $\xb$ in every direction $u$;
equivalently, $\partial_C f(\xb)$ is a singleton
(\cite[Proposition 2.2.4]{Cla83}).
\smallbreak
Let us now go back to the subdifferential determination property in the
framework of locally Lipschitz functions. This property has been demonstrated for the Clarke subdifferential by Rockafellar \cite{Roc82}
on $\R^n$ for everywhere regular functions (Corollary 3 of Theorem 2);
by Correa-Jofré \cite{CJ89} on a Banach space, for densely Gateaux differentiable and everywhere regular functions (Proposition 4.1) and
for densely Gateaux differentiable and everywhere semismooth functions
(Proposition 5.4) (note that the hypothesis of dense Gateaux
differentiability is automatically satisfied on $\R^n$ by Rademacher's
theorem, on separable Banach spaces by Christensen's theorem,
on smooth spaces by Preiss' theorem); by Qi \cite{Qi89} on $\R^n$
for the so-called \textit{primal} functions, i.e.\ the functions that
are strictly differentiable almost everywhere and $D$-representable
(in fact, in $\R^n$ the functions that are strictly differentiable almost everywhere are always $D$-representable, see \cite[Corollary 3.10]{Bor91}
or \cite[Corollary 4.2]{BM97}); examples of primal functions
are the functions almost everywhere regular
\cite[Corollary 2 of Theorem 2]{Roc82} and the functions
everywhere semismooth \cite[Theorem 1]{Qi90}. All these results have been
generalized by Borwein-Moors (\cite[Theorem 3.8]{BM98b}), still for the
Clarke subdifferential, on arbitrary Banach spaces, for the so-called
\textit{essentially smooth} functions, i.e.\ the functions $f$ such that
for each $u$, $f$ is strictly differentiable in the direction $u$ almost
everywhere (in the sense of Haar).
The proof of \cite[Theorem 3.8]{BM98b} lies in the fact that the Clarke
subdifferential mapping of such functions is a minimal weak$^*$-cusco.
In Subsection \ref{variant}, we study the subdifferential determination
property in the class $\CLACG(G)$ of all real-valued Continuous functions
defined on a nonempty open convex subset $G$ of a Banach space $X$,
whose restrictions to Line segments $[a,b]\subset G$ are ACG
(see Section \ref{Dinisect} for the definition). This class includes the locally Lipschitz functions defined on $G$. In this class, we consider
the subclass $\CLACG^{\natural{ad}}(G)$
of \textit{densely almost upper semismooth} functions,
i.e.\ the functions $f$ satisfying the following property:
for every $\xb, u \in X$ with $[\xb,\xb+u]\subset G$
there exist sequences $\xb_n\to \xb$, $u_n\to u $ such that
for almost all $x_n \in [\xb_n,\xb_n+u_n[$,
$f^r(x_n; u_n)$ is finite and $f$ is upper semismooth at
$x_n$ in the direction $u_n$.
We show that this subclass contains the Borwein-Moors essentially
smooth functions (Proposition \ref{BMclass}) and is subdifferentially
determined for the abstract subdifferential in the class $\CLACG(U)$
(Theorem \ref{subdiffdetcont}).
This theorem appears as a continuous variant of
our main theorem which concerns lower semicontinuous functions.
\smallbreak
Let us now discuss the subdifferential determination property
for the class of extended-real-valued lower semicontinuous functions.
In this general context, the conditions \eqref{intro1b} and \eqref{intro3}
are no longer suitable. In fact, we must take into account the value
$f^r(x;\xb-x)$ which now does not necessarily tend towards $0$ when $x$
tends towards $\xb$: we must integrate this factor in the formulas. Similar
phenomena occur every time we are dealing with unbounded values; see,
for example, the definition \eqref{wclosure} below of the closure $\delc f$
of the subdifferential $\del f$ of a lower semicontinuous function $f$,
the discussion in \cite[Subsection 2.1]{JL02} on the closure of the
convex subdifferential, and the references therein.
Thus, the conditions \eqref{intro1b} and \eqref{intro3}
are respectively replaced by 
\begin{equation}\label{intro1bb}
f^r(\xb;u)=\inf_{\alpha\ge 0}\limsup_{x\to \xb} f^r(x; u +\alpha (\xb-x)),
\end{equation}
\begin{equation}\label{intro3b}
f^r(\xb;u)=
\inf_{\alpha\ge 0}\limsup_{x\to_u \xb} f^r(x; u +\alpha (\xb-x)).
\end{equation}
Clearly, the conditions \eqref{intro1bb} and \eqref{intro3b}
reduce to \eqref{intro1b} and \eqref{intro3}, respectively,
when the function $f$ is Lipschitz around $\xb$.

In \cite{Las18b} we have shown that the following
lower semicontinuous functions
$f$ satisfy \eqref{intro1bb} at every point of every segment
$[\xb,\xb+u[\subset\dom f$:
the convex functions,
the directionally approximately convex functions
and the $\del$-subdifferentially and directionally stable functions
of Thibault-Zagrodny \cite{TZ05}.
More precisely, all these functions belong to the subclass
$\Lsc^{\natural\natural}(X)$ of the functions $f\in\Lsc(X)$
with convex domain $\dom f$ such that
for every $[\xb,\xb+u]\subset \dom f$ and for all points
$\xb_t\in [\xb,\xb+u{[}$,
$f^r(\xb_t;u)<+\infty$ and \eqref{intro1bb} is satisfied.
Functions in $\Lsc^{\natural\natural}(X)$ have a powerful continuity
property along line segments, namely,
the restriction of every $f\in \Lsc^{\natural\natural}(X)$
to any line segment $[\xb,\xb+u]\subset \dom f$ is continuous at
the endpoints and locally Lipschitz at every point in ${]}\xb,\xb+u{[}$
(Proposition \ref{altFUSS}).
Finally, we show that the class $\Lsc^{\natural\natural}(X)$ is
subdifferentially determined in $\Lsc(X)$
(Theorem \ref{determination-revisited}\,(1)), thus extending
the special case $\gamma= 0$ in \cite[Theorem 4.1]{TZ05}.
A slight variant of Theorem \ref{determination-revisited}\,(1),
with a more complicated formulation, had been previously established
in \cite[Theorem 11]{Las18b}.

Next we consider the classes $\LC(X)$ and $\LACG(X)$ of
extended-real-valued lower semicontinuous functions whose
restrictions to line segments are, respectively, continuous and ACG.
In these (slightly) smaller classes, we expect to identify subclasses of
subdifferentially determined functions larger than the subclass
$\Lsc^{\natural\natural}(X)$. This is indeed the case! We sketch the
plan for the class $\LACG(X)$. We denote by $\LACG^{\natural{a}}(X)$
the subclass of those functions $f\in \LACG(X)$
with convex domain $\dom f$ such that
for every $[\xb,\xb+u]\subset \dom f$ and for almost all
points $\xb_t\in [\xb,\xb+u]$,
$f^r(\xb_t;u)$ is finite and \eqref{intro3b} is satisfied.
We show that $\LACG^{\natural{a}}(X)$ is indeed larger than
$\Lsc^{\natural\natural}(X)$, contains 
the lower semicontinuous
$\del$-essentially directionally smooth functions of Thibault-Zagrodny
\cite{TZ10} (Proposition \ref{eds-uss}) and
is subdifferentially determined in $\LACG(X)$
(Theorem \ref{determination-revisited}\,(3)). This extends
\cite[Theorem 3.4]{TZ10}.

Finally, we identify a large class of lower semicontinuous functions
which can be recovered from their (abstract) subdifferential via an
integration formula, namely, the functions $f\in \LACG_*(X)$
with convex domain $\dom f$ such that
for every $[\xb,\xb+u]\subset \dom f$ and for almost all
points $\xb_t\in [\xb,\xb+u]$, \eqref{intro3b} is satisfied
(Theorem \ref{recover}).
This subclass of subdifferentialy representable functions contains,
among other things, the 
extended-real-valued lower semicontinuous convex functions
considered by Rockafelar \cite{Roc70b} and the primal functions on $\R^n$
considered by Qi \cite[Theorem 9]{Qi89}. A
detailed description of this subclass is given in Subsection 5.3.

\smallbreak
It should be noticed that some interesting results mentioned above
are not recovered by our approach: the
subdifferential determination of primal lower nice functions
and the like,
studied in Poliquin \cite{Pol91}, Thibault-Zagrodny \cite{TZ95} and
Bernard-Thibault-Zagrodny \cite{BTZ05};
the local subdifferential determination of regular directionally Lipschitz
functions established by Thibault-Zlateva \cite{TZla05};
the subdifferential determination and the subdifferential representation
of locally Lipschitz functions, in finite dimensional spaces,
for the Michel-Penot subdifferential, given by Birge-Qi \cite{BQ93};
the subdifferential determination of the locally Lipschitz functions
that are essentially smooth on a generic line
parallel to a generic direction, in Asplund spaces, for the Clarke subdifferential, proven by Zaj{\'{\i}}{\v{c}}ek
\cite[Proposition 7.5]{Zaj12}.
\smallbreak
This paper is a continuation of our works \cite{Las18a,Las18b}.
In \cite{Las18a}, we have established a formula linking the radial subderivative
to other subderivatives and subdifferentials.
In \cite{Las18b}, we have proved a simple version of the subdifferential
determination property without resorting to measure and integration
theories. Here we propose a more precise statement of the subdifferential
determination problem and we provide new contributions based this time on
measure and integration theories. 
Moreover, we establish the subdifferential representation property
for a large class of extended-real-valued lower semicontinuous functions.
As in our paper \cite{Las18b}, the technique for demonstrating the
main theorems in the present paper
relies on our formula linking subderivative and subdifferential \cite{Las18a}
that reduces the original subdifferential problem on a Banach space to
a problem involving the radial subderivative on the real line.
The theory of ACG functions and Henstock-Kurzweil integrals is then used to
address this reduced problem. The relevant definitions and facts from this
theory are gathered in Section \ref{Dinisect}.
Subderivatives and subdifferentials are described in Section \ref{subsub}.
The main results are stated and proved in Section \ref{recovering}.
The last section contains examples and variants.

\section{Links between functions and subderivatives on the real line}\label{Dinisect}
In this section, we have compiled the relevant facts concerning the
ACG functions and the Henstock-Kurzweil integral
in connection with our subject.
Most of these facts have been taken from Gordon's textbook \cite{Gor94},
which offers a thorough analysis of this integral.
\smallbreak
Throughout the section, $[a,b]$ denotes a compact interval of $\R$,
$\varphi:\R\to\xR$ an extended-real-valued function and
$\dom \varphi:=\{x\in \R\tq \varphi(x)<+\infty\}$ its \textit{effective domain}.
The \textit{lower and upper right-hand Dini derivatives} of $\varphi$ 
at $t_0\in\dom \varphi$ are respectively given by
$$
D_+\varphi(t_0):=\liminf_{t\searrow 0}\frac{\varphi(t_0+t)-\varphi(t_0)}{t}, ~~~
D^+\varphi(t_0):=\limsup_{t\searrow 0}\frac{\varphi(t_0+t)-\varphi(t_0)}{t}.
$$
For points  $t_0\not\in\dom \varphi$, the Dini derivatives
are defined to be $-\infty$.
We say that $\varphi$ is \textit{right-differentiable} at $t_0$ if
$$
\lim_{t\searrow 0}\frac{\varphi(t_0+t)-\varphi(t_0)}{t}
$$
exists in $\R$, or equivalently, if $D_+\varphi(t_0)$ is finite and
$D_+\varphi(t_0)=D^+\varphi(t_0)$.
\smallbreak
Let $S\subset [a,b]$.
The function $\varphi$
is \textit{absolutely continuous (AC) on $S$}
if for each $\eps>0$ there exists $\delta >0$ such that
$\sum_{i=1}^n|\varphi(d_i)-\varphi(c_i)|<\eps$ whenever
$\{[c_i,d_i] : 1\le i\le n\}$ is a finite collection of
non-overlapping intervals that have endpoints in $S$
and satisfy $\sum_{i=1}^n(d_i-c_i)<\delta$.
The function $\varphi$ is \textit{generalized  absolutely continuous
(ACG) on $S$} if $\varphi|_S$ is continuous on $S$ and
$S$ can be written as a countable
union of sets on each of which $\varphi$ is AC.
\smallbreak
A property is said to hold \textit{almost everywhere in $S$},
or \textit{for almost all $t\in S$}, if it holds
in $S$ except for a set of (Lebesgue) measure zero.
As in \cite{Gor94}, we say that a property
holds \textit{nearly everywhere in $S$},
or \textit{for nearly all $t\in S$}, if it holds
 in $S$ except for a countable set.

\begin{theorem}[Subderivative test for monotonicity]
\label{monotone-test}
Let $\varphi:\R\to\xR$, $a\in \dom \varphi$ and $b\in\R$ with $b>a$.
Then $\varphi$ is nonincreasing on $[a,b]$ in each of the following situations:
\smallbreak {\rm (1)}
$\varphi$ is lower semicontinuous on $[a,b]$ and $D_+\varphi(t)\le 0$
everywhere in $[a,b{[}$,
\smallbreak {\rm (2)}
$\varphi$ is continuous on $[a,b]$ and $D^+\varphi(t)\le 0$
nearly everywhere in $[a,b]$,
\smallbreak {\rm (3)}
$\varphi$ is ACG on $[a,b]$ and $D_+\varphi(t)\le 0$
almost everywhere in $[a,b]$.
\end{theorem}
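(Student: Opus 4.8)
In each of the three situations the plan is to reduce the statement to the single inequality $\varphi(b)\le\varphi(a)$: once this is proved, it applies verbatim to every subinterval $[\alpha,\beta]\subseteq[a,b]$, since all the hypotheses on $\varphi$ are inherited by $[\alpha,\beta]$ and, in case $\alpha\notin\dom\varphi$, the inequality $\varphi(\beta)\le\varphi(\alpha)=+\infty$ is automatic; this is exactly the assertion that $\varphi$ is nonincreasing. The common device will be to tilt $\varphi$ by a small slope $\eps>0$, prove the tilted inequality, and then let $\eps\searrow0$.

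For (1) I would set $g(t):=\varphi(t)-\eps(t-a)$, which is again lower semicontinuous and satisfies $D_+g(t)=D_+\varphi(t)-\eps\le-\eps<0$ on $[a,b{[}$. Let $c:=\sup\{t\in[a,b]\tq g(t)\le\varphi(a)\}$. Because $g$ is lower semicontinuous its sublevel set $\{g\le\varphi(a)\}$ is closed and contains $a$, so $c$ lies in it and $g(c)\le\varphi(a)$. If $c<b$, then $D_+g(c)<0$ supplies a sequence $t_n\searrow0$ with $g(c+t_n)<g(c)\le\varphi(a)$; these points belong to the sublevel set and exceed $c$, contradicting the definition of $c$. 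Hence $c=b$, so $\varphi(b)-\eps(b-a)\le\varphi(a)$, and letting $\eps\searrow0$ gives the claim. The point to stress is that the \emph{liminf} nature of $D_+$ is harmless here: to contradict a supremum one only needs a single point (indeed a sequence) beyond $c$, which $D_+g(c)<0$ provides.

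For (2) the same scaffold is used, but now the countable exceptional set $C=\{c_n\}$ must be absorbed. I would introduce the nondecreasing, left-continuous jump function $s(t):=\sum_{c_n<t}2^{-n}$ (so $s(b)\le1$ and $s$ jumps by at least $2^{-k}$ across $c_k$) and work with $A:=\{t\in[a,b]\tq \varphi(\tau)-\varphi(a)\le\eps(\tau-a)+\eps s(\tau)\ \text{for all }\tau\in[a,t]\}$, putting $c:=\sup A$. Continuity of $\varphi$ together with left-continuity of $s$ shows $c\in A$. If $c<b$ I would split into two cases: when $c\notin C$, the hypothesis $D^+\varphi(c)\le0<\eps$ gives $\varphi(c+t)-\varphi(c)<\eps t$ for \emph{all} small $t>0$, which (using $s(\tau)\ge s(c)$ for $\tau\ge c$) pushes the defining inequality across $c$; when $c=c_k\in C$, the downward jump $\eps2^{-k}$ of $-\eps s$ at $c_k$, combined with the continuity of $\varphi$ at $c_k$, again pushes the inequality just past $c_k$. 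Either way $c$ fails to be maximal, so $c=b$ and $\varphi(b)-\varphi(a)\le\eps(b-a)+\eps$; letting $\eps\searrow0$ finishes. I expect the crossing of the bad points through a summable jump budget to be the main obstacle here; note also that one genuinely needs the \emph{limsup} derivative $D^+$, since crossing a whole interval requires control of \emph{all} small increments, not merely of a sequence.

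For (3) the everywhere/countable arguments are unavailable, because a measure-zero exceptional set may be uncountable, so I would instead invoke the structure of ACG functions recalled in this section. Since $\varphi$ is ACG on $[a,b]$ it is differentiable almost everywhere, and at each point of differentiability $\varphi'(t)=D_+\varphi(t)$; together with $D_+\varphi\le0$ a.e.\ this yields $\varphi'\le0$ a.e. Applying the descriptive form of the fundamental theorem for the Henstock--Kurzweil integral, which recovers an ACG function from its derivative, gives $\varphi(\beta)-\varphi(\alpha)=\int_\alpha^\beta\varphi'\le0$ for every $[\alpha,\beta]\subseteq[a,b]$ by monotonicity of the integral, whence $\varphi$ is nonincreasing. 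Here the real work is carried by the cited ACG/Henstock--Kurzweil facts, namely differentiability almost everywhere and the descriptive fundamental theorem, which are precisely what a merely measure-zero exceptional set forces us to use.
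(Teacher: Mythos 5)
Your treatments of (1) and (2) are correct and self-contained, and they take a different route from the paper: for (1) the paper invokes the mean value inequality of \cite[Lemma 4.1]{JL13} (first to show $[a,b]\subset\dom\varphi$, then on each subinterval), and for (2) it simply cites \cite[Proposition 3]{Las18b}, whereas your supremum-plus-tilting argument and your summable jump budget $\eps s(\tau)$ for the countable exceptional set prove both cases from scratch. What your approach buys is independence from external lemmas; what it costs is length. Both versions are sound, and your remark that the $\liminf$ derivate suffices in (1) while the $\limsup$ derivate is genuinely needed in (2) is exactly the right distinction.

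Part (3), however, has a genuine gap. Your opening step asserts that an ACG function on $[a,b]$ is differentiable almost everywhere. This is false, and the paper says so explicitly just before Fact \ref{diff-ACG}: it is the ACG$_*$ functions that are differentiable a.e., while ``an ACG function is not necessarily differentiable almost everywhere'' (\cite[Example 6.20]{Gor94}). An ACG function is only \emph{approximately} differentiable a.e., and the hypothesis $D_+\varphi\le 0$ a.e.\ does not control the approximate derivative, since one only has $D_+\varphi(t)\le\varphi'_{ap}(t)\le D^+\varphi(t)$ wherever $\varphi'_{ap}(t)$ exists. The second step fails for the same reason: the descriptive fundamental theorem you invoke (Fact \ref{ACG*-HK}, equivalently Theorem \ref{recov-subdiv-ACG}) characterizes indefinite HK-integrals as exactly the ACG$_*$ functions, so for a merely ACG $\varphi$ you cannot conclude that $\varphi'$ is HK-integrable, let alone that $\varphi(\beta)-\varphi(\alpha)=\int_\alpha^\beta\varphi'$. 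Your argument as written proves the ACG$_*$ analogue of (3), not (3) itself. The paper closes this case by citing the monotonicity theorem for ACG functions directly (\cite[Theorem 6.25]{Gor94}), which is formulated precisely for Dini derivates of ACG functions; some such ACG-specific input (Gordon's theorem, or the wide Denjoy/Khintchine integral together with a derivate that actually dominates $\varphi'_{ap}$) is unavoidable here.
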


\begin{proof}
(1) First, we observe that $[a,b]\subset \dom \varphi$.
Indeed, let $c \in [a,b]$ and let $\mu\in\R$ such that
$\mu\le \varphi(c)-\varphi(a)$.
Applying the mean value inequality \cite[Lemma 4.1]{JL13}, we get a point
$x_0\in [a,c{[}$ such that
$\mu\le (c-a)D_+\varphi(x_0)\le 0$. Consequently,
$\varphi(c)-\varphi(a)\le 0$, hence $c\in \dom \varphi$.
Now, let $c,d\in  [a,b]$, $c< d$. Then,
$c,d\in \dom \varphi$.
Applying again the mean value inequality, we get a point
$x_0\in [c,d{[}$ such that
$\varphi(d)-\varphi(c)\le (d-c)D_+\varphi(x_0)\le 0$.
Hence, $\varphi(d)\le \varphi(c)$.

\smallbreak (2) This is a special case of \cite[Proposition 3]{Las18b}.

\smallbreak (3) See, e.g., \cite[Theorem 6.25]{Gor94}.\end{proof}

Let $\omega(\varphi,[c,d]):=
\sup\{|\varphi(y)-\varphi(x)|\tq c\le x\le y\le d\}$
denote the \textit{oscillation} of the function $\varphi$
on the interval $[c,d]$.
The function $\varphi:[a,b]\to\R$ is
\textit{absolutely continuous in the restricted sense (AC$_*$) on
$S\subset [a,b]$}
if for each $\eps>0$ there exists $\delta >0$ such that
$\sum_{i=1}^n\omega(\varphi,[c_i,d_i])<\eps$ whenever
$\{[c_i,d_i] : 1\le i\le n\}$ is a finite collection of
non-overlapping intervals that have endpoints in $S$ and
$\sum_{i=1}^n(d_i-c_i)<\delta$.
The function $\varphi$ is \textit{generalized
absolutely continuous in the restricted sense (ACG$_*$) on
$S\subset [a,b]$} if $\varphi|_S$ is continuous on $S$ and
$S$ can be written as a countable union of sets on each of which
$\varphi$ is AC$_*$.

If $\varphi$ is AC on the interval $[a,b]$, then
$\varphi$ is AC$_*$ on $[a,b]$; if $\varphi$ is continuous on $[a,b]$
and AC on every interval $[c,d]\subset {]}a,b{[}$, then
$\varphi$ is ACG$_*$ on $[a,b]$ since $\varphi$ is AC$_*$ on
each of the sets $\{a\}$, $\{b\}$ and $[a+1/n,b-1/n]$, $n\in\N$.
Clearly, ACG$_*$ implies ACG. The converse is not true:
an ACG function is not necessarily differentiable almost everywhere
\cite[Example 6.20]{Gor94} while an ACG$_*$ function is:

\begin{fact}[{\cite[Corollary 6.19, Exercise 7.9, Theorem 6.22]{Gor94}}]
\label{diff-ACG}
Let $\varphi:[a,b]\to\R$ be continuous.
\smallbreak
{\rm(1)} If $\varphi:[a,b]\to\R$ is ACG$_*$ on $[a,b]$, then $\varphi$ is
ACG on $[a,b]$ and
differentiable almost everywhere on $[a,b]$.
The converse is not true.
\smallbreak
{\rm(2)} If $\varphi$ is right-differentiable nearly everywhere on
$[a,b]$, then $\varphi$ is ACG$_*$ on $[a,b]$.
\end{fact}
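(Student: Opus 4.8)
The plan is to treat the two parts separately, using only the definitions of AC$_*$/ACG$_*$ together with the classical Lebesgue differentiation theorem and a direct oscillation estimate. For part (1), the inclusion ACG$_*\Rightarrow$ ACG is immediate: since $|\varphi(d_i)-\varphi(c_i)|\le\omega(\varphi,[c_i,d_i])$, any family witnessing the AC$_*$ estimate also witnesses the AC estimate, and passing to a countable union of sets transfers this from AC$_*$ to ACG. For differentiability almost everywhere, I would first observe that AC$_*$ on a set $S$ forces bounded variation in the restricted sense (VB$_*$) on $S$, since the $\eps$--$\delta$ control of $\sum\omega(\varphi,[c_i,d_i])$ bounds the restricted variation. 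The substantive input is then the classical fact that a function which is VB$_*$ on $S$ is differentiable almost everywhere on $S$; the standard route extends $\varphi$ across the countably many open gaps of $S$ in $[\inf S,\sup S]$ to a function of ordinary bounded variation, writes it as a difference of nondecreasing functions, and applies Lebesgue's theorem, using that the restricted-variation control governs $\varphi$ in the gaps so that the full derivative exists at almost every (density) point of $S$. Since ACG$_*$ writes $[a,b]=\bigcup_n S_n$ with $\varphi$ AC$_*$ (hence VB$_*$) on each $S_n$, differentiability holds a.e. on each $S_n$, and a countable union of null sets is null. The failure of the converse I would defer to the standard counterexample of an ACG function differentiable a.e. but not ACG$_*$.

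For part (2), let $C\subset[a,b]$ be the countable set off which $\varphi$ is right-differentiable. The plan is to exhibit $[a,b]$ as a countable union of sets carrying the AC$_*$ property. The set $C$ is a countable union of singletons, on each of which $\varphi$ is trivially AC$_*$. For the remaining points I would stratify by the size of the right difference quotients: for $n\in\N$ put
\[
E_n:=\{x\in[a,b]\setminus C:|\varphi(t)-\varphi(x)|\le n\,(t-x)\ \text{for all}\ t\in[x,x+\tfrac1n]\cap[a,b]\}.
\]
If $x\notin C$ then $\varphi'_+(x)$ exists and is finite, so the right difference quotients stay bounded near $x$; choosing $n$ larger than both this bound and the reciprocal of the admissible radius places $x$ in $E_n$, whence $\{E_n\}$ covers $[a,b]\setminus C$. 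To respect the interval-length restriction implicit in the bound, I would refine each $E_n$ into the pieces $E_{n,k}:=E_n\cap[(k-1)/n,k/n]$, so that any interval with endpoints in a single $E_{n,k}$ has length at most $1/n$.

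The main obstacle is controlling the oscillation $\omega(\varphi,[c,d])$ on an interval whose endpoints lie in $E_{n,k}$, since the defining bound is only one-sided; this dissolves once the length restriction is in force. If $c,d\in E_{n,k}$ with $c\le d$, then $d-c\le 1/n$, so the bound at $c$ applies on all of $[c,d]$, giving $|\varphi(t)-\varphi(c)|\le n(t-c)$ for every $t\in[c,d]$, and the triangle inequality yields $\omega(\varphi,[c,d])\le 2n(d-c)$. Summing over a non-overlapping family with endpoints in $E_{n,k}$ gives $\sum_i\omega(\varphi,[c_i,d_i])\le 2n\sum_i(d_i-c_i)$, so choosing $\delta=\eps/(2n)$ shows that $\varphi$ is AC$_*$ on $E_{n,k}$. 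Since $[a,b]=C\cup\bigcup_{n,k}E_{n,k}$ is a countable union of sets on each of which $\varphi$ is AC$_*$, and $\varphi$ is continuous on $[a,b]$ by hypothesis, $\varphi$ is ACG$_*$ on $[a,b]$, as required.
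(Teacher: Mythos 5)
Your proposal is correct. Note, however, that the paper does not prove this Fact at all: it is quoted verbatim from Gordon's textbook (Corollary 6.19, Exercise 7.9, Theorem 6.22), so there is no internal argument to compare against; what you have written is essentially the standard textbook proof reconstructed from the definitions. Part (2) is the genuinely complete piece of your write-up: the stratification by $E_n$, the refinement into $E_{n,k}=E_n\cap[(k-1)/n,k/n]$ to force $d-c\le 1/n$ so that the one-sided bound at the left endpoint controls the whole interval, and the resulting estimate $\omega(\varphi,[c,d])\le 2n(d-c)$ are exactly right, and you correctly use the global continuity hypothesis to get continuity of $\varphi|_S$ on each piece. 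In part (1), the implication ACG$_*\Rightarrow$ ACG is indeed immediate from $|\varphi(d_i)-\varphi(c_i)|\le\omega(\varphi,[c_i,d_i])$, but the two substantive ingredients --- that a VB$_*$ function is differentiable almost everywhere (where the gap-extension argument needs some care at non-density points and in controlling $\varphi$ inside the gaps via the restricted oscillation), and the counterexample showing an ACG, a.e.-differentiable function need not be ACG$_*$ --- are only invoked, not proved. That is a reasonable division of labour for a cited Fact, and it mirrors what the paper itself does, but be aware that those two deferred items carry most of the weight of statement (1).
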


A function $\varphi:[a,b]\to\R$ is
\textit{Henstock-Kurzweil integrable (HK-integrable) on $[a,b]$}
with integral $\Phi_{a,b}\in \R$ if
for each $\eps>0$ there exists a positive function
$\delta:[a,b]\to {]}0,+\infty[$ such that
$$\left|\sum_{i=1}^n \varphi(t_i)(d_i-c_i)-\Phi_{a,b}\right|<\eps$$
whenever
$\Pi:=\{(t_i,[c_i,d_i]) : 1\le i\le n\}$ is a finite collection of
non-overlapping tagged intervals (i.e. $t_i\in [c_i,d_i]$)
that satisfy $(d_i-c_i)<\delta(t_i)$ for all $i$.
If it exists, the integral $\Phi_{a,b}$ is uniquely defined
and we write
$$\Phi_{a,b}=\int_a^b \varphi(t)dt
:=\lim_{HK,\Pi}\sum_{i=1}^n \varphi(t_i)(d_i-c_i).$$
Moreover, if $\varphi$ is HK-integrable on $[a,b]$, then
$\varphi$ is HK-integrable on every subinterval of $[a,b]$
(see, e.g., \cite[Theorem 4]{Hen68} or \cite[Theorem 9.8]{Gor94}),
hence for every $x\in[a,b]$, the so-called indefinite HK-integral
$$\Phi_{a,x}=\int_a^x \varphi(t)dt$$
exists.
It turns out that a function $\Phi$ is an indefinite HK-integral on
$[a,b]$ if and only if $\Phi$ is ACG$_*$ on $[a,b]$.
More precisely:
 
\begin{fact}[{\cite[Theorem 9.17 and Exercise 11.7]{Gor94}}]
\label{ACG*-HK}
Let $\varphi,\Phi:[a,b]\to\R$ be two functions.
The following statements are equivalent: 
\smallbreak
{\rm(a)} $\varphi$ is HK-integrable on $[a,b]$ and
$\Phi(x)-\Phi(a)=\int_a^x \varphi(t)dt$ for all $x\in[a,b]$;
\smallbreak
{\rm(b)} $\Phi$ is ACG$_*$ on $[a,b]$ and 
$\Phi'(t)=\varphi(t)$ almost everywhere in $[a,b]$.
\end{fact}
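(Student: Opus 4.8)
The plan is to treat this as the fundamental theorem of calculus for the Henstock--Kurzweil integral and prove the two implications separately. The workhorse for both directions is the Saks--Henstock lemma, a refinement of the definition of HK-integrability: if $\varphi$ is HK-integrable on $[a,b]$ with indefinite integral $\Phi$, and $\delta$ is a gauge associated with $\eps>0$ as in the definition, then $\sum_i |\varphi(t_i)(d_i-c_i)-(\Phi(d_i)-\Phi(c_i))|\le\eps$ for every $\delta$-fine collection of non-overlapping tagged subintervals. I would establish this first, since it upgrades the single global estimate in the definition of the integral into quantitative control over each individual subinterval, which is exactly what the absolute-continuity and differentiability conclusions require.

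For the implication (a) $\Rightarrow$ (b), I would first deduce from Saks--Henstock that $\Phi$ is continuous on $[a,b]$. To show $\Phi$ is ACG$_*$, the idea is to decompose $[a,b]$ according to where the gauge is bounded below: setting $E_n=\{t : \delta(t)\ge 1/n\}$ gives a countable cover of $[a,b]$, and on each $E_n$ the Saks--Henstock estimate together with the elementary bound on $|\varphi(t_i)|(d_i-c_i)$ forces the oscillation sums $\sum_i\omega(\Phi,[c_i,d_i])$ to be small, yielding AC$_*$ of $\Phi$ on $E_n$. The deepest point is the differentiability clause $\Phi'(t)=\varphi(t)$ almost everywhere: here I would invoke that an ACG$_*$ function is differentiable almost everywhere (Fact \ref{diff-ACG}(1)) and then identify the derivative with $\varphi$ by a Lebesgue-point/density argument, squeezing the difference quotients of $\Phi$ at a typical $t$ against $\varphi(t)$ using the per-interval Saks--Henstock control on small intervals containing $t$.

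For the converse (b) $\Rightarrow$ (a), I start from an ACG$_*$ function $\Phi$ with $\Phi'=\varphi$ almost everywhere and must produce, for each $\eps>0$, a gauge $\delta$ witnessing HK-integrability with value $\Phi(b)-\Phi(a)$. On the set $D$ where $\Phi'(t)=\varphi(t)$, the definition of the derivative supplies for each $t\in D$ a radius $\delta(t)>0$ with $|\Phi(s)-\Phi(t)-\varphi(t)(s-t)|\le \eps|s-t|$ whenever $|s-t|<\delta(t)$; summing over a tagged partition then controls the contribution of all tags lying in $D$. On the null exceptional set $N=[a,b]\setminus D$, I would exploit the AC$_*$ property: writing $N$ as a countable union of pieces on each of which $\Phi$ is AC$_*$, I cover each piece by finitely many intervals of small total oscillation, which simultaneously bounds $|\Phi(d_i)-\Phi(c_i)|$ and the term $|\varphi(t_i)|(d_i-c_i)$ for tags in $N$, and this prescribes $\delta$ on $N$. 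Telescoping $\sum_i(\Phi(d_i)-\Phi(c_i))=\Phi(b)-\Phi(a)$ then delivers the required estimate.

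The main obstacle is the differentiability clause $\Phi'=\varphi$ a.e.\ in the direction (a) $\Rightarrow$ (b): knowing $\Phi$ is differentiable a.e.\ is routine once ACG$_*$ has been established, but pinning the almost-everywhere value of $\Phi'$ to the integrand $\varphi$ is the substantive content, and it is precisely where Saks--Henstock must be used quantitatively rather than merely qualitatively. In the converse direction the delicate point is the uniform handling of the null set $N$ through the AC$_*$ covering so that its contribution to the Riemann sums is negligible; this is exactly where restricted absolute continuity (AC$_*$), as opposed to ordinary ACG, is indispensable, since an ACG function need not be differentiable almost everywhere.
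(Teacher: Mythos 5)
The paper does not prove this statement: it is imported as a black box from Gordon's monograph (Theorem 9.17 and Exercise 11.7 of \cite{Gor94}), so there is no internal argument to compare yours against. What you have written is a reconstruction of the standard textbook proof, and its architecture --- the Saks--Henstock lemma as workhorse, a gauge-level decomposition for ACG$_*$, a derivative-based gauge on the good set and a covering argument on the null set for the converse --- is the right one.

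Two places in your sketch have genuine gaps. First, in (a) $\Rightarrow$ (b), the sets $E_n=\{t:\delta(t)\ge 1/n\}$ do not by themselves yield AC$_*$: to bound the term $\sum_i|\varphi(t_i)|(d_i-c_i)$ you must also stratify by the size of the integrand, e.g.\ $E_{n,m}=\{t:\delta(t)\ge 1/n,\ |\varphi(t)|\le m\}$, further cut into pieces of diameter at most $1/n$ so that intervals with endpoints in a piece are automatically $\delta$-fine, and then upgrade the Saks--Henstock bound on $\sum_i|\Phi(d_i)-\Phi(c_i)|$ to one on $\sum_i\omega(\Phi,[c_i,d_i])$ by a second application of Saks--Henstock to a refined tagged collection realizing the oscillations. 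Also, ``Lebesgue point'' is not the right notion for identifying $\Phi'$ with $\varphi$, since $\varphi$ need not be locally Lebesgue integrable; the correct mechanism is a Vitali-covering estimate showing that $\{t:\Phi'(t)\text{ exists and }|\Phi'(t)-\varphi(t)|>\eta\}$ is null for each $\eta>0$, using the per-interval Saks--Henstock control exactly as you intend. Second, and more seriously, in (b) $\Rightarrow$ (a) your covering of the null set $N$ by intervals of small total oscillation controls $\sum_{t_i\in N}|\Phi(d_i)-\Phi(c_i)|$ but cannot control $\sum_{t_i\in N}|\varphi(t_i)|(d_i-c_i)$, because $\varphi$ is completely unrestricted (possibly unbounded) on $N$. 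You must either first replace $\varphi$ by $0$ on $N$ --- proving separately that a function vanishing off a null set is HK-integrable with integral zero, via the stratification $N_m=\{t\in N: m-1\le|\varphi(t)|<m\}$ covered by open sets of measure less than $\eps/(m2^m)$ --- or build that stratification directly into your gauge. These are standard repairs, but the two estimates you claim to obtain ``simultaneously'' from one covering are not both available from it.
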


In view of Fact \ref{ACG*-HK}, we can consider integrating functions
$\varphi$ that are only defined and finite almost everywhere in
$[a, b]$: such a function $\varphi:[a,b]\to\Rb$ is declared
HK-integrable on $[a,b]$ if there exists a finite-valued HK-integrable
function $\psi:[a,b]\to\R$
such that $\varphi(t)=\psi(t)$ almost everywhere in $[a,b]$.
Then $\int_a^x \varphi(t)dt:=\int_a^x \psi(t)dt$ and
the integral thus defined is independent of the chosen function $\psi$.
\begin{theorem}[Subderivative representation of functions]
\label{recov-subdiv-ACG}
A function $\varphi:[a,b]\to\R$ can be represented through
its subderivative via the HK-integration formula
\begin{equation}\label{subrep}
\varphi(x)-\varphi(a)=\int_a^x D_+\varphi(t)dt,\quad \forall x\in [a,b],
\end{equation}
if and only if $\varphi$ is ACG$_*$ on $[a,b]$.
\end{theorem}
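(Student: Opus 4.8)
The plan is to reduce both implications to Fact~\ref{ACG*-HK}, the characterization of indefinite HK-integrals as ACG$_*$ functions, supplemented by Fact~\ref{diff-ACG}(1). The elementary observation driving the argument is that at every point $t_0$ where $\varphi$ is differentiable, the full limit of the difference quotient exists, so its $\liminf$ as $t\searrow 0$ equals it; that is, $D_+\varphi(t_0)=\varphi'(t_0)$. Consequently, on the set where $\varphi$ is differentiable we have $D_+\varphi=\varphi'$, and there $D_+\varphi$ is finite.

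For the implication ($\Leftarrow$), I would start from the assumption that $\varphi$ is ACG$_*$ on $[a,b]$. Since ACG$_*$ entails continuity on $[a,b]$ by definition, Fact~\ref{diff-ACG}(1) applies and yields that $\varphi$ is differentiable almost everywhere. Setting $\psi:=\varphi'$ (finite almost everywhere), the pair $(\psi,\varphi)$ satisfies condition (b) of Fact~\ref{ACG*-HK}: $\varphi$ is ACG$_*$ and $\varphi'=\psi$ almost everywhere. Hence condition (a) holds, so $\psi$ is HK-integrable with $\varphi(x)-\varphi(a)=\int_a^x\psi(t)\,dt$ for every $x\in[a,b]$. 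Because $D_+\varphi=\varphi'=\psi$ almost everywhere and $\psi$ is a finite-valued HK-integrable representative of $D_+\varphi$, the convention for integrating almost-everywhere-defined functions (stated just after Fact~\ref{ACG*-HK}) gives $\int_a^x D_+\varphi(t)\,dt=\int_a^x\psi(t)\,dt$, and \eqref{subrep} follows.

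For the implication ($\Rightarrow$), I would read off the hypothesis that \eqref{subrep} holds together with the meaning of its right-hand side. By the same convention, the very fact that $\int_a^x D_+\varphi(t)\,dt$ is defined and equals $\varphi(x)-\varphi(a)$ means that $D_+\varphi$ is finite almost everywhere and that there is a finite-valued HK-integrable $\psi$ with $\psi=D_+\varphi$ almost everywhere and $\varphi(x)-\varphi(a)=\int_a^x\psi(t)\,dt$ for all $x$. Thus the pair $(\psi,\varphi)$ satisfies condition (a) of Fact~\ref{ACG*-HK}, so condition (b) holds; in particular $\varphi$ is ACG$_*$ on $[a,b]$, which is what we want.

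The only point that requires genuine care---and hence the main (minor) obstacle---is the bookkeeping around the convention for HK-integrating extended-real-valued functions defined and finite only almost everywhere. One must justify that the possibly infinite subderivative $D_+\varphi$ may legitimately be replaced by a finite-valued HK-integrable representative $\psi$ agreeing with it almost everywhere before invoking Fact~\ref{ACG*-HK}, in which both functions are finite-valued. Once this identification is in place, each implication is a direct transcription of the equivalence (a)$\Leftrightarrow$(b) in Fact~\ref{ACG*-HK}.
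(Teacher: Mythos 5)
Your proof is correct and follows essentially the same route as the paper: both directions reduce to the equivalence (a)$\Leftrightarrow$(b) of Fact~\ref{ACG*-HK}, with Fact~\ref{diff-ACG}(1) supplying almost-everywhere differentiability (hence $D_+\varphi=\varphi'$ a.e.) in the ACG$_*$ $\Rightarrow$ representation direction. Your extra bookkeeping about the convention for HK-integrating functions defined and finite only almost everywhere is a point the paper passes over more quickly, but it is the same argument.
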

\begin{proof}
By Fact \ref{ACG*-HK}, (a) $\Rightarrow$ (b), if $\varphi$ can be
represented as an indefinite HK-integral  on $[a,b]$, then
$\varphi$ is ACG$_*$ on $[a,b]$.
Conversely, if $\varphi$ is ACG$_*$ on $[a,b]$, then
by Fact \ref{diff-ACG}\,(1), its derivative $\varphi'(t)$ exists
almost everywhere in $[a,b]$, hence $D_+\varphi(t)=\varphi'(t)$
almost everywhere in $[a,b]$.
It therefore follows from Fact \ref{ACG*-HK}, (b) $\Rightarrow$ (a),
that the 
function $t\mapsto D_+\varphi(t)$
is HK-integrable on $[a,b]$ and $\varphi(x)-\varphi(a)=\int_a^x D_+\varphi(t)dt.$
\end{proof}
\section{Links between subderivatives and subdifferentials}\label{subsub}

From now on, $X$ is a real Banach space,
$B_X$ its unit ball, $X^*$ the topological dual,
and $\la .,. \ra$ the duality pairing.
For $x, y \in X$, we let $[x,y]:=\{ x+t(y-x) \tq t\in[0,1]\}$;
the sets $]x,y[$ and $[x,y[$ are defined accordingly.
Set-valued operators $T:X\rightrightarrows X^*$
are identified with their graph $T\subset X\times X^*$
and we write $\dom T:=\{x\in X\tq T(x)\ne\emptyset\}$.
All extended-real-valued functions $f : X\to\xR$ are assumed to be
lower semicontinuous (lsc)
and \textit{proper}, which means that
the set $\dom f:=\{x\in X\tq f(x)<\infty\}$ is non-empty.
A net $(x_\nu)_\nu\subset X$ is said to
\textit{converge to $\xb$ in the direction $v\in X$},
written $x_\nu\to_v \xb$,
if there are two nets $t_\nu\searrow 0$ (that is,
$t_\nu\to 0$ with $t_\nu>0$)
and $v_\nu\to v$ such that $x_\nu=\xb + t_\nu v_\nu$ for all $\nu$.
Observe that for $v=0$, $x_\nu\to_v\xb$ simply means $x_\nu\to\xb$.

\medbreak
The framework, terminology and notation are the same as in our works
\cite{Las18a,Las18b}. Let be given a lsc function $f:X\to\xR$,
a point $\xb\in\dom f$ and a direction $u\in X$.
We recall that the (lower right Dini) \textit{radial subderivative},
its upper version
and its upper strict version (the \textit{Clarke subderivative}) are
respectively defined by
\begin{align*}
f^r(\xb;u)&:=\liminf_{t\searrow 0}\,\frac{f(\xb+tu)-f(\xb)}{t},~~
f^r_+(\xb;u):=\limsup_{t\searrow 0}\,\frac{f(\xb+tu)-f(\xb)}{t},\\
f^\circ(\bx;u)&:=  
\limsup_{\substack{t \searrow 0\\(x,f(x)) \to (\bx,f(\bx))}}
\frac{f(x+tu) -f(x)}{t},
\end{align*}
whereas the (lower right Dini-Hadamard) \textit{directional subderivative}
and its upper strict version (the \textit{Clarke-Rockafellar subderivative})
are respectively given by: 
\begin{align*}
f^d(\bx;u)&:=
\liminf_{\substack{t \searrow 0\\u' \to u}}\frac{f(\bx+tu')-f(\bx)}{t},\\
f^\uparrow(\bx;u)&:= \sup_{\delta>0} 
\limsup_{\substack{t \searrow 0\\(x,f(x)) \to (\bx,f(\bx))}}
\inf_{u' \in B_{\delta}(u)}\frac{f(x+tu') -f(x)}{t}.
\end{align*}
For points  $\xb\not\in\dom f$, all the subderivatives
are defined to be $-\infty$.
\smallbreak
Besides these classical subderivatives, we 
shall also consider \textit{upper semicontinuous regularizations}
of the radial subderivative, in the \textit{directional sense} and
in the \textit{full sense}:
\begin{subequations}\label{natural}
\begin{align}
\fu(\xb;u):=\inf_{\alpha\ge 0}\limsup_{x\to_u \xb} f^r(x; u +\alpha (\xb-x)),
\label{naturala}\\
\fsu(\xb;u):=
\inf_{\alpha\ge 0}\limsup_{x\to \xb} f^r(x; u +\alpha (\xb-x)).
\label{naturalb}
\end{align}
\end{subequations}
It turns out that the regularized subderivatives $\fu$ and $\fsu$
can be expressed in terms of any bivariate function $f'$ lying between
the subderivatives $f^d$ and $f^\uparrow$
(point (3) below):
\begin{fact}[{\cite[Proposition 4, Proposition 7 and Theorem 3]{Las18a}}]
\label{nat-subdiv}
Let $f:X\to\xR$ be lsc on a Banach space $X$,
$\xb\in \dom f$ and $u\in X$.
\smallbreak
{\rm(1)}
If $f$ is convex, then $f^r(\xb;u)=\fsu(\xb;u)$.
\smallbreak
{\rm(2)} If $f(\xb)=\liminf_{t\searrow 0}f(\xb+tu)$,
then $f^r_+(\xb;u)\le \fu(\xb;u)$.
\smallbreak
{\rm(3)}
For any $f': X\times X\to \Rb$ such that
$f^d\le f'\le f^\uparrow$, 
\begin{subequations}\label{formula0}
\begin{align}
\fu(\xb;u)=\inf_{\alpha\ge 0}\limsup_{x\to_u\xb} f'(x;u+\alpha (\xb-x)),
\label{subderiv-formula0a}\\
\fsu(\xb;u)=\inf_{\alpha\ge 0}\limsup_{x\to\xb}\,f'(x;u+\alpha (\xb-x)).
\label{subderiv-formula0b}
\end{align}
\end{subequations}
\end{fact}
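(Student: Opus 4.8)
The three assertions all compare the two upper-regularization operators
\[
R_u(g):=\inf_{\alpha\ge0}\limsup_{x\to_u\xb}g(x;u+\alpha(\xb-x)),
\qquad
R(g):=\inf_{\alpha\ge0}\limsup_{x\to\xb}g(x;u+\alpha(\xb-x)),
\]
applied to the various subderivatives of $f$; by definition $\fu(\xb;u)=R_u(f^r)$ and $\fsu(\xb;u)=R(f^r)$, and $\fu(\xb;u)\le\fsu(\xb;u)$ since directional convergence is more restrictive than ordinary convergence. Two elementary observations organize the whole proof: first, $g\mapsto R_u(g)$ and $g\mapsto R(g)$ are monotone, because a pointwise inequality between integrands is preserved by $\limsup$ and then by $\inf_\alpha$; second, one always has the pointwise orderings $f^d\le f^r$ and $f^d\le f^\uparrow$, the former because $f^d$ is a $\liminf$ over a larger index set than $f^r$ (it also lets $u'\to u$), the latter because $f^\uparrow$ is the largest of the subderivatives. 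I would treat (3) first and deduce (1) from it together with convex calculus.

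For (3), the plan is to show that the regularized quantities $R_u(f^d)$, $R_u(f^r)$ and $R_u(f^\uparrow)$ all coincide (and likewise for $R$). The statement then follows immediately: from $f^d\le f'\le f^\uparrow$ and monotonicity, $R_u(f')$ is sandwiched between $R_u(f^d)$ and $R_u(f^\uparrow)$, hence equals $\fu(\xb;u)$. To obtain the coincidence I would close the cycle
\[
R_u(f^d)\ \le\ R_u(f^\uparrow)\ \le\ R_u(f^r)\ \le\ R_u(f^d),
\]
whose first link is monotonicity applied to $f^d\le f^\uparrow$. The two remaining links are the substance of the result and constitute the main obstacle. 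The inequality $R_u(f^r)\le R_u(f^d)$ must be proved \emph{against} the pointwise ordering $f^d\le f^r$: the gain comes entirely from the freedom in the correction $\alpha(\xb-x)$ and in the base-point $\limsup$. Concretely, starting from a direction $v=u+\beta(\xb-x)$ nearly realizing the right-hand side and from the definition of $f^d(x;v)$ as a $\liminf$ over $(t,v')$, I would invoke the one-dimensional mean value inequality (as in the proof of Theorem \ref{monotone-test}) to locate nearby base points, reachable in the direction $u$, at which the purely radial subderivative is no larger than the directional value, after adjusting $\alpha$. The inequality $R_u(f^\uparrow)\le R_u(f^r)$ is of the same nature: the $\inf_{u'}$ and $\sup_\delta$ built into $f^\uparrow$ are matched by the $\limsup$ over base points and the $\alpha$-correction, bringing the largest subderivative down to the regularized radial one. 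I expect both reductions to rely on the mean value inequality to convert directional and limiting information into radial information at displaced points; this is where the argument of \cite{Las18a} does its real work.

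For (2) I would pass to the one-dimensional section $g(t):=f(\xb+tu)$, which is lower semicontinuous on $[0,+\infty[$, and observe that $f^r_+(\xb;u)=D^+g(0)$ while $f^r(\xb+tu;u)=D_+g(t)$. The hypothesis $f(\xb)=\liminf_{t\searrow0}f(\xb+tu)$ exactly forbids an upward jump of $g$ at $0$ (otherwise $f^r_+=+\infty$ while $\fu$ may be finite), so $D^+g(0)$ is a genuine $\limsup$ of slopes $(g(s_n)-g(0))/s_n$. Applying the mean value inequality on each $[0,s_n]$ yields $\tau_n\in[0,s_n[$ with $D_+g(\tau_n)\ge(g(s_n)-g(0))/s_n$; putting $x_n=\xb+\tau_n u\to_u\xb$ and using positive homogeneity of $f^r$ in its direction to absorb the factor $1-\alpha\tau_n$ arising from $u+\alpha(\xb-x_n)=(1-\alpha\tau_n)u$, I obtain $\limsup_{x\to_u\xb}f^r(x;u+\alpha(\xb-x))\ge f^r_+(\xb;u)$ for every $\alpha\ge0$, whence $f^r_+(\xb;u)\le\fu(\xb;u)$ after taking the infimum over $\alpha$. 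This part is clean and does not touch the obstacle above.

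For (1), convexity turns $f^r(x;\cdot)$ into the usual sublinear directional derivative and makes the hypothesis of (2) hold along every ray, so $f^r(\xb;u)\le\fu(\xb;u)\le\fsu(\xb;u)$ gives one inequality for free. For the reverse $\fsu(\xb;u)\le f^r(\xb;u)$, I would exploit the upper semicontinuity of the convex subdifferential, equivalently that $x\mapsto f^r(x;u)$ satisfies $\limsup_{x\to\xb}f^r(x;u)\le f^r(\xb;u)$ when $\xb$ lies in the interior of $\dom f$, so that the choice $\alpha=0$ already bounds $\fsu(\xb;u)$ by $f^r(\xb;u)$; the $\alpha$-correction is needed only at boundary points of $\dom f$, where the subdifferential fails to be locally bounded, and there sublinearity $f^r(x;u+\alpha(\xb-x))\le f^r(x;u)+\alpha f^r(x;\xb-x)$ together with three-slope monotonicity controls the extra term. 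This yields equality and is the most routine of the three parts.
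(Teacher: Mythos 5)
First, a point of comparison: the paper does not prove this statement at all --- it is imported as a Fact, with a citation to \cite[Propositions 4, 7 and Theorem 3]{Las18a} --- so there is no internal proof to measure yours against, and your attempt has to stand on its own. Your architecture (monotonicity of the two regularization operators $R_u$ and $R$, a sandwich argument for (3), a one-dimensional reduction plus the mean value inequality for (2)) is the right one, and your treatment of (2) is essentially a complete proof, up to one loose end: the mean value inequality only guarantees a point $\tau_n\in[0,s_n[$, and if it returns the left endpoint $\tau_n=0$ your witnesses degenerate to $x_n=\xb$, which is \emph{not} an admissible net for $x\to_u\xb$ when $u\ne 0$ (the definition forces $t_\nu>0$ and $v_\nu\to u$); that case has to be ruled out or handled separately.

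The genuine gap is in (3). The cycle $R_u(f^d)\le R_u(f^\uparrow)\le R_u(f^r)\le R_u(f^d)$ is the correct reduction, but the only link you actually establish is the first (trivial) one. The other two --- bringing $f^\uparrow$ down to the regularized $f^r$, and $f^r$ down to the regularized $f^d$ --- are exactly the content of \cite[Theorem 3]{Las18a}, and your text only announces that they ``should follow from the mean value inequality'' without constructing the displaced base points, specifying how $\alpha$ is adjusted, or explaining how the $\sup_\delta$ and $\inf_{u'}$ built into $f^\uparrow$ (together with the constraint $f(x)\to f(\xb)$ in its $\limsup$) are converted into radial difference quotients. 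As written, (3) is a plan for a proof, not a proof; since (1) and the whole paper's use of this Fact rest on (3), this is the step that must be supplied. Separately, (1) contains a false intermediate claim: convexity does not make the hypothesis of (2) hold along every ray --- for $f$ the indicator function of $\{0\}$ and $u\ne 0$ one has $f(\xb)=0<+\infty=\liminf_{t\searrow 0}f(\xb+tu)$ --- so the inequality $f^r(\xb;u)\le \fsu(\xb;u)$ needs a separate (easy) argument when the ray exits $\dom f$ immediately, and the boundary-point half of the reverse inequality $\fsu(\xb;u)\le f^r(\xb;u)$ is again only sketched.
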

The relationships between the regularized and the classical
subderivatives are visualized on the following diagram
where $\rightarrow$ means `$\le$', $\xrightarrow{*}$ means
`$\le$ provided $f(\xb)=\liminf_{t\searrow 0}f(\xb+tu)$',
and $\xrightarrow{**}$ means `$\le$ provided $f$ is continuous
relative to its domain':
\begin{align}\label{diagram}
f^r(\xb;u)  & \rightarrow f^r_+(\xb;u)\xrightarrow{*}\fu(\xb;u)\rightarrow \fsu(\xb;u) \xrightarrow{**} f^\circ(\xb;u)\nonumber\\
\uparrow \quad &  \hspace{5cm}   \uparrow \quad\\
f^d(\xb;u)  & \qquad\qquad\quad\longrightarrow \qquad\qquad\quad f^\uparrow(\xb;u)\nonumber
\end{align}
If $f$ is locally Lipschitz at $\xb$ relative to its domain,
one has $f^r(\xb;u)=f^d(\xb;u)$,
$\fu(\xb;u)=\limsup_{x\to_u \xb} f^r(x; u)$ and
$\fsu(\xb;u)=\limsup_{x\to \xb} f^r(x; u)=f^\circ(\xb;u)=
f^\uparrow(\xb;u)$,
so the above diagram becomes a line:
\begin{align}\label{diagramlip}
f^r(\xb;u) =f^d(\xb;u) \rightarrow f^r_+(\xb;u)\rightarrow \fu(\xb;u)
\rightarrow \fsu(\xb;u)= f^\circ(\xb;u)=
f^\uparrow(\xb;u)\tag{\ref{diagram}Lip}.
\end{align}
If in addition $f$ is regular at $\xb$ in the sense of Clarke,
i.e.\ $f^r(\xb; u)=f^\circ(\xb;u)$,
then all the subderivatives coincide. But in general
the inequality $\fu(\xb;u)\leq\fsu(\xb;u)$ is strict:
for $f:x\in \R\mapsto -|x|$, $\xb=0$ and $u\ne 0$,
$$
\fu(0;u)=\limsup_{x\to_u 0} f^r(x;u)=-|u|<
\fsu(0;u)=\limsup_{x\to 0} f^r(x;u)=|u|.
$$
\medbreak
Next, given a lsc function $f:X\to\xR$ and a point $\xb\in\dom f$,
we recall that the \textit{Moreau-Rockafellar subdifferential}
(the subdifferential of convex analysis) and the \textit{Clarke subdifferential} are respectively defined by
\begin{gather*}
 \del_{MR} f (\xb) :=
 \{ x^* \in X^* \tq \la x^*,y-\xb\ra + f(\xb) \leq f(y),\, \forall y \in X \},\\
\partial_{C} f(\bx) := \{x^* \in X^* \tq \langle x^*,u\rangle \leq
f^\uparrow(\bx;u), \, \forall u \in X\}.
\end{gather*}
All the classical subdifferentials (proximal,
Fr\'echet, Hadamard, Ioffe, Michel-Penot, \ldots)
lie between these two objects,
and for a lsc convex $f$, all these subdifferentials coincide.
\ps
In the sequel, we call \textit{subdifferential} any operator $\del$ that associates 
a set-valued mapping $\partial f: X \rightrightarrows X^\ast$
to each function $f$ on $X$ so that
$$\del_{MR} f\subset \partial f\subset \partial_{C} f,$$
and the following \textit{Separation Principle} is satisfied:
\medbreak\noindent
(SP) \textit{For any lsc $f,\varphi$ with $\varphi$ convex Lipschitz
near $\xb\in\dom f $,
if $f+\varphi$ admits a local minimum at $\xb$, then
$0\in \delc f(\xb)+ \del \varphi(\xb),$ where
\begin{multline}\label{wclosure}
\delc f(\xb):= \{\, \xb^*\in X^*\tq \mbox{there is a net }((x_\nu,x^*_\nu))_\nu\subset \del f \mbox{ with }\\
       (x_\nu,f(x_\nu))\to (\bx,f(\bx)),\ x^*_\nu\tow \bx^*,\ \limsup_\nu\,\la x^*_\nu,x_\nu-\xb\ra\le 0\,\}.
\end{multline}}

The Clarke subdifferential, the Michel-Penot (moderate) subdifferential
and the Ioffe subdifferential satisfy (SP) in any Banach space.
The elementary subdifferentials (proximal, Fr\'echet, Hadamard, \ldots),
as well as their viscosity and limiting versions,
satisfy (SP) in appropriate Banach spaces.
See, e.g.\ \cite{Iof12,JL13,Pen13} and the references therein.
\ps
Subdifferentials satisfying the Separation Principle
(SP) are densely defined:
\begin{fact}[{\cite[Theorem 5.1]{JL13}}]\label{densitytheorem}
Let $X$ be a Banach space,
$f:X\to\xR$ be lsc and $\xb\in\dom f$.
Then, there exists a sequence $((x_n,x_n^*))_n\subset\del f$ such that
$x_n\to \xb$, $f(x_n)\to f(\xb)$ and $\limsup_n \la x_n^*, x_n-\bx\ra\le 0$.
\end{fact}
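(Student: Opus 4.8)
The plan is to combine Ekeland's variational principle with the Separation Principle (SP) and then unfold the definition \eqref{wclosure} of $\delc f$ in order to descend from the closure $\delc f$ to $\del f$ itself. First I would exploit lower semicontinuity at $\xb$: since $\liminf_{x\to\xb}f(x)\ge f(\xb)$, there are $r>0$ and $c\in\R$ with $f\ge c$ on the closed ball $\bar{B}(\xb,r)$, which is then a complete metric space on which $f$ is proper, lsc and bounded below. For a large parameter $\ld>0$ I would consider the localized functional $g_\ld:=f+\ld\|\cdot-\xb\|$; since $g_\ld(\xb)=f(\xb)$ while $g_\ld(x)\ge c+\ld\|x-\xb\|$, every point with $g_\ld(x)\le f(\xb)$ satisfies $\|x-\xb\|\le(f(\xb)-c)/\ld$, so the infimum of $g_\ld$ over the ball is approached arbitrarily close to $\xb$ as $\ld\to\infty$.

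Applying Ekeland's principle to $g_\ld$ on the ball with displacement weight $\eps\in(0,\ld)$ yields a point $\xt$ (depending on $\ld,\eps$) that minimizes $g_\ld+\eps\|\cdot-\xt\|$ over the ball and satisfies $g_\ld(\xt)\le\inf_{\bar{B}}g_\ld+\eps\le f(\xb)+\eps$; in particular $\|\xt-\xb\|\le(f(\xb)+\eps-c)/\ld<r$ for $\ld$ large, so $\xt$ is interior to the ball and hence a genuine local minimizer over $X$ of $f+\varphi$, with $\varphi:=\ld\|\cdot-\xb\|+\eps\|\cdot-\xt\|$ convex and Lipschitz. By (SP), $0\in\delc f(\xt)+\del\varphi(\xt)$; since every element of the convex subdifferential $\del\varphi(\xt)$ has the form $\ld p+\eps q$ with $p\in\del\|\cdot-\xb\|(\xt)$ and $\|q\|\le1$, I obtain $\xt^*\in\delc f(\xt)$ with
\[
\la\xt^*,\xt-\xb\ra=-\ld\la p,\xt-\xb\ra-\eps\la q,\xt-\xb\ra\le(\eps-\ld)\|\xt-\xb\|\le0,
\]
using the norm-subdifferential identity $\la p,\xt-\xb\ra=\|\xt-\xb\|$. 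Moreover $f(\xt)\le g_\ld(\xt)\le f(\xb)+\eps$, so letting $\ld\to\infty$ and $\eps\to0$ forces $\xt\to\xb$, whence $f(\xt)\to f(\xb)$ by lower semicontinuity.

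It remains to pass from $\delc f$ to $\del f$. By the definition \eqref{wclosure} of $\xt^*\in\delc f(\xt)$ there is a net $((y_\nu,y_\nu^*))_\nu\subset\del f$ with $y_\nu\to\xt$, $f(y_\nu)\to f(\xt)$, $y_\nu^*\tow\xt^*$ and $\limsup_\nu\la y_\nu^*,y_\nu-\xt\ra\le0$; splitting $\la y_\nu^*,y_\nu-\xb\ra=\la y_\nu^*,y_\nu-\xt\ra+\la y_\nu^*,\xt-\xb\ra$ and using $y_\nu^*\tow\xt^*$ together with the sign condition above gives $\limsup_\nu\la y_\nu^*,y_\nu-\xb\ra\le\la\xt^*,\xt-\xb\ra\le0$. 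Hence, running $\ld=\ld_n\to\infty$ and $\eps=\eps_n\to0$ (with $\eps_n<\ld_n$) and selecting for each $n$ a single index $\nu$ from the corresponding net, I can extract $(x_n,x_n^*)\in\del f$ with $\|x_n-\xb\|<1/n$, $|f(x_n)-f(\xb)|<1/n$ and $\la x_n^*,x_n-\xb\ra<1/n$, which is exactly the required sequence. The main obstacle is the two-stage limiting argument: one must keep the Ekeland point interior to the ball (so that (SP) applies to a true local minimum) while simultaneously driving $\xt$ to $\xb$, and then carry out the net-to-sequence extraction carefully, since (SP) delivers only elements of the closure $\delc f$, and it is the weak$^*$ control $y_\nu^*\tow\xt^*$ that transfers the sign condition from $\xt$ back to $\xb$.
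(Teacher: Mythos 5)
The paper does not prove this statement: it is imported as a Fact from \cite[Theorem 5.1]{JL13}, so there is no internal proof to compare against. Your argument is correct and is the standard route to such density results --- localize via lower semicontinuity, apply Ekeland's variational principle to $f+\ld\|\cdot-\xb\|$, invoke (SP) at the resulting interior local minimizer, and then unfold the definition \eqref{wclosure} of $\delc f$ to descend to a genuine net in $\del f$, using the weak$^*$ convergence to transfer the sign condition from $\xt$ back to $\xb$; the final diagonal extraction over $\ld_n\to\infty$, $\eps_n\to 0$ is carried out correctly.
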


We call \textit{subderivative associated to a subdifferential $\del f$}
at a point $\xb\in \dom f$ in the direction $u\in X$ 
the \textit{support function} of the set $\del f(\xb)$ in the direction $u$,
which we denote by
$$
f^\del(\xb;u):=
\sup \,\{\la \xb^*,u \ra \tq \xb^*\in\del f(\xb)\}.
$$
A key feature is that the regularized subderivatives $\fu$ and $\fsu$
can also be expressed in terms of $f^\del$
for any subdifferential $\partial$:
\begin{fact}[{\cite[Theorem 3]{Las18a}}]
\label{nat-subdiff}
Let $f:X\to\xR$ be lsc on a Banach space $X$,
$\xb\in \dom f$ and $u\in X$.
For any subdifferential $\del$, 
\begin{subequations}
\begin{align}
\fu(\xb;u)=\inf_{\alpha\ge 0}\limsup_{x\to_u\xb} f^\del(x;u+\alpha (\xb-x)),
\label{Subdiff-formula0a}\\
\fsu(\xb;u)=\inf_{\alpha\ge 0}\limsup_{x\to\xb}\,f^\del(x;u+\alpha (\xb-x)).
\label{Subdiff-formula0b}
\end{align}
\end{subequations}
\end{fact}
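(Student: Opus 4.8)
The plan is to deduce \eqref{Subdiff-formula0a} and \eqref{Subdiff-formula0b} from the already established Fact \ref{nat-subdiv}\,(3) by sandwiching the support function $f^\del$ between $f^d$ and $f^\uparrow$ once it has been passed through the regularization $\inf_{\alpha\ge 0}\limsup$. The obstacle that prevents a direct application of Fact \ref{nat-subdiv}\,(3) is that $f^\del$ does \emph{not} obey the pointwise bounds $f^d\le f^\del\le f^\uparrow$: at a point $x$ with $\del f(x)=\emptyset$ one has $f^\del(x;\cdot)\equiv-\infty$, so no pointwise lower bound by $f^d$ can hold. I would therefore prove the two inequalities of \eqref{Subdiff-formula0a} separately, the upper one being immediate and the lower one carrying all the difficulty; the proof of \eqref{Subdiff-formula0b} is word for word the same with each $x\to_u\xb$ replaced by $x\to\xb$.

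For the upper bound, recall that every subdifferential satisfies $\del f(x)\subset\del_C f(x)$, and that by definition of the Clarke subdifferential each $x^*\in\del_C f(x)$ obeys $\la x^*,v\ra\le f^\uparrow(x;v)$ for all $v$; taking the supremum over $\del f(x)$ gives the pointwise inequality $f^\del(x;v)\le f^\uparrow(x;v)$. Since $f^d\le f^\uparrow$, the function $f'=f^\uparrow$ is admissible in Fact \ref{nat-subdiv}\,(3), whence
$$\inf_{\alpha\ge 0}\limsup_{x\to_u\xb}f^\del(x;u+\alpha(\xb-x))\le\inf_{\alpha\ge 0}\limsup_{x\to_u\xb}f^\uparrow(x;u+\alpha(\xb-x))=\fu(\xb;u).$$

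For the lower bound --- the heart of the matter --- I must show the reverse inequality, knowing from Fact \ref{nat-subdiv}\,(3) applied with the admissible choice $f'=f^d$ that $\fu(\xb;u)=\inf_{\alpha\ge 0}\limsup_{x\to_u\xb}f^d(x;u+\alpha(\xb-x))$. Lacking a pointwise bound, I would recover the directional subderivative from the subdifferential at nearby points. Fixing $\lambda<\fu(\xb;u)$ and $\alpha\ge 0$, I would first select $x$ close to $\xb$ in the direction $u$ with $f^d(x;u+\alpha(\xb-x))>\lambda$, read this off as a suitable local lower behaviour of $f$ along $u+\alpha(\xb-x)$, and then invoke the Separation Principle (SP) --- equivalently, Fact \ref{densitytheorem} applied to $f$ perturbed by a linear functional tuned to that direction --- to produce a point $x'$ as close as desired to $x$, with $f(x')$ close to $f(x)$, together with a subgradient $x'^*\in\del f(x')$ for which $\la x'^*,u+\alpha'(\xb-x')\ra\ge\lambda-\eps$, where $\alpha'\ge\alpha$ is a slightly enlarged parameter. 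The correction term $\alpha(\xb-x)$, together with the freedom to enlarge $\alpha$, is exactly what reconciles the value $f(x')$ delivered by (SP) with the value $f(x)$ entering the subderivative, through the quantity $\la x'^*,x'-\xb\ra$ appearing in the closure \eqref{wclosure}. Letting $x'\to_u\xb$, then $\eps\to 0$, and finally taking the infimum over $\alpha$ yields $\inf_{\alpha\ge 0}\limsup_{x\to_u\xb}f^\del(x;u+\alpha(\xb-x))\ge\lambda$, and since $\lambda<\fu(\xb;u)$ was arbitrary the identity \eqref{Subdiff-formula0a} follows. I expect this construction --- manufacturing subgradients whose support in a prescribed direction approximates $f^d$ from below while steering the two coupled limits (the convergence $x'\to_u\xb$ and the closure defect $\la x'^*,x'-\xb\ra$) --- to be the main technical obstacle.
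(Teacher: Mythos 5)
The paper does not prove this statement: it is imported verbatim as a Fact from \cite[Theorem 3]{Las18a}, so there is no internal proof to compare against, and your sketch must be judged against the strategy of the cited source. Your decomposition is the right one and matches it: the inequality $\le$ in \eqref{Subdiff-formula0a} does follow from the pointwise bound $f^\del\le f^\uparrow$ (valid also where $\del f(x)=\emptyset$ or $x\notin\dom f$, by the conventions on empty suprema and on subderivatives outside the domain) combined with Fact \ref{nat-subdiv}\,(3) for $f'=f^\uparrow$, and the inequality $\ge$ must indeed be manufactured from (SP) because $f^d\le f^\del$ fails pointwise. But the lower bound, which you rightly call the heart of the matter, has two genuine gaps as sketched. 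The tool you propose is too weak: Fact \ref{densitytheorem} applied to $f$ plus a linear functional does not produce subgradients whose pairing with a prescribed direction $w$ exceeds a given $\lambda<f^d(x;w)$ --- a linear perturbation of an lsc function need not attain a local minimum anywhere useful, and the density theorem controls only $\la x_n^*,x_n-\xb\ra$. What is actually required is the subdifferential estimate of the directional derivative of \cite{JL13} (the substantive ingredient of \cite[Theorem 3]{Las18a}): for $\lambda<f^d(x;w)$ and $\eps>0$ there exist $y$ in an $\eps$-neighbourhood of a short segment $x+[0,\eps]w$, with $f(y)$ close to $f(x)$, and $y^*\in\del f(y)$ satisfying \emph{both} $\la y^*,w\ra>\lambda-\eps$ \emph{and} $\la y^*,x-y\ra>-\eps$; its derivation from (SP) needs a penalized minimization localizing the minimum near that segment, and it is not ``equivalent'' to the density theorem.

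The second gap is logical. To get $\inf_{\alpha\ge0}\limsup_{x\to_u\xb}f^\del(x;u+\alpha(\xb-x))\ge\fu(\xb;u)$ you must prove $\limsup_{x\to_u\xb}f^\del(x;u+\alpha(\xb-x))\ge\lambda$ for \emph{every fixed} $\alpha\ge0$, including $\alpha=0$. Your construction delivers the bound only for some enlarged $\alpha'\ge\alpha$, and since $\alpha\mapsto\limsup_{x\to_u\xb}f^\del(x;u+\alpha(\xb-x))$ has no monotonicity, a bound along a cofinal family of parameters says nothing about the infimum; moreover passing from $\alpha$ to $\alpha'$ introduces the term $(\alpha'-\alpha)\la y^*,\xb-y\ra$, whose sign is uncontrolled. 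The correct mechanism keeps the same $\alpha$: with $w=u+\alpha(\xb-x)$ and $y$, $y^*$ as above,
\begin{equation*}
\la y^*,u+\alpha(\xb-y)\ra=\la y^*,w\ra+\alpha\,\la y^*,x-y\ra>\lambda-(1+\alpha)\eps,
\end{equation*}
and $y\to_u\xb$ once $\eps$ is taken small relative to $\|x-\xb\|$; this gives the bound for the given $\alpha$ directly, with no enlargement. It is precisely the extra control $\la y^*,x-y\ra>-\eps$ --- not the freedom to enlarge $\alpha$ --- that reconciles the base point of the subderivative with the base point of the subgradient.
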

\noindent As a straightforward consequence, we obtain a variant of
\cite[Proposition 7]{Las18b}:

\begin{theorem}[Subdifferential representation of the radial
subderivative]\label{recoversubdiv}
Let $f:X\to\xR$ be lsc on a Banach space $X$ and let $\del$ be a
subdifferential. Then, for any $\xb\in \dom f$ and $u\in X$,
\begin{subequations}\label{recoversubdivformula}
\begin{align}
f^r(\xb;u)=\fu(\xb;u)\Longleftrightarrow
f^r(\xb;u)=\inf_{\alpha\ge 0}\limsup_{x\to_u\xb} f^\del(x;u+\alpha (\xb-x)),
\label{recoversubdivformula0a}\\
f^r(\xb;u)=\fsu(\xb;u)\Longleftrightarrow
f^r(\xb;u)=\inf_{\alpha\ge 0}\limsup_{x\to\xb}\,f^\del(x;u+\alpha (\xb-x)).
\label{recoversubdivformula0b}
\end{align}
\end{subequations}
\end{theorem}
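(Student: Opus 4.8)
The plan is to read the statement off directly from Fact \ref{nat-subdiff}, which already expresses the two regularized subderivatives $\fu(\xb;u)$ and $\fsu(\xb;u)$ in terms of the support function $f^\del$. Indeed, formula \eqref{Subdiff-formula0a} asserts that $\fu(\xb;u)$ coincides with the infimum--limsup quantity $\inf_{\alpha\ge 0}\limsup_{x\to_u\xb} f^\del(x;u+\alpha (\xb-x))$ that appears on the right-hand side of \eqref{recoversubdivformula0a}, and formula \eqref{Subdiff-formula0b} asserts the analogous identity relating $\fsu(\xb;u)$ to the quantity $\inf_{\alpha\ge 0}\limsup_{x\to\xb} f^\del(x;u+\alpha (\xb-x))$ on the right-hand side of \eqref{recoversubdivformula0b}. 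So the whole proof is essentially a substitution.

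Concretely, the first step is to treat \eqref{recoversubdivformula0a}: writing $C:=\inf_{\alpha\ge 0}\limsup_{x\to_u\xb} f^\del(x;u+\alpha (\xb-x))$, Fact \ref{nat-subdiff} gives the equality $\fu(\xb;u)=C$ in $\Rb$. Hence the condition $f^r(\xb;u)=\fu(\xb;u)$ and the condition $f^r(\xb;u)=C$ are one and the same assertion about the extended real number $f^r(\xb;u)$, and the biconditional in \eqref{recoversubdivformula0a} holds trivially. The second step repeats this verbatim for \eqref{recoversubdivformula0b}, now invoking \eqref{Subdiff-formula0b} to identify the right-hand expression with $\fsu(\xb;u)$.

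There is no genuine obstacle here: all the analytic content --- that the upper semicontinuous regularizations of the radial subderivative, in the directional and full senses, can be reconstructed from \emph{any} subdifferential lying between $\del_{MR}$ and $\partial_C$ --- is carried entirely by Fact \ref{nat-subdiff}, that is, by \cite[Theorem 3]{Las18a}. The only thing to check in passing is that the substitution is legitimate irrespective of finiteness: since Fact \ref{nat-subdiff} is an equality of extended reals valid for every $u\in X$ at the point $\xb\in\dom f$ (including the cases where the common value is $+\infty$ or $-\infty$), replacing one side of each equivalence by the other causes no difficulty. Once this is noted, Theorem \ref{recoversubdiv} is simply a repackaging of Fact \ref{nat-subdiff} into a criterion for recovering the radial subderivative $f^r(\xb;u)$ itself --- rather than its regularizations --- from the subdifferential support function $f^\del$.
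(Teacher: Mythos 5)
Your argument is exactly the paper's: the theorem is stated there as ``a straightforward consequence'' of Fact \ref{nat-subdiff}, with no further proof given, and your substitution of \eqref{Subdiff-formula0a}--\eqref{Subdiff-formula0b} into the two equivalences is precisely what that remark intends. The observation that the identification holds as an equality of extended reals, so the biconditionals are immediate, is correct and complete.
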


A lsc function $f:X\to\xR$ is declared \textit{upper semismooth}
(respectively, \textit{strictly upper semismooth})
at a point $\xb\in\dom f$ in the direction $u\in X$ provided
$\fu(\xb;u)= f^r(\xb;u)$ (respectively, $\fsu(\xb;u)= f^r(\xb;u)$)
--- the definitions used in \cite{Las18b} are slightly less demanding,
with $\le$ instead of $=$.
Examples of upper semismooth functions are
the locally Lipschitz Mifflin semismooth functions;
examples of strictly upper semismooth functions are
the locally Lipschitz Clarke regular functions,
the proper lsc (approximately) convex functions,
the lower-C$^1$ functions,
the Thibault-Zagrodny directionally stable functions.
See \cite{Las18b} and the last section of
the present paper for further discussion.
By Theorem \ref{recoversubdiv}, a function $f$ is
upper semismooth (respectively, strictly upper semismooth)
at a point $\xb\in \dom f$
in the direction $u$ if and only if its radial subderivative
$f^r(\xb;u)$ at $\xb$ in the direction $u$ can be recovered
from a subdifferential through the formula \eqref{recoversubdivformula0a}
(respectively, the formula \eqref{recoversubdivformula0b}).

\section{Links betweens functions and subdifferentials}
\label{recovering}

This section is devoted to the study of 
the subdifferential determination and the
subdifferential representation of a function.
We write $\Lsc(X)$ for the class of all lsc functions on $X$ and
$\LC(X)$ ($\LACG_*(X)$, $\LACG(X)$, respectively)
for the subclass of $\Lsc(X)$ consisting of lsc functions $f$
whose restrictions to Line segments $[a,b]\subset \dom f$ are Continuous
(ACG$_*$, ACG, respectively).

\smallbreak
For the subdifferential determination issue,
we consider three subclasses of $\Lsc(X)$
depending on the degree of regularity of the radial subderivative
of the functions, namely, the classes of strictly, nearly and
almost upper semismooth functions. They are respectively defined by: 
\begin{multline*}
\Lsc^{\natural\natural}(X)=
\{ g\in \Lsc(X)\tq \dom g \text{ is convex and }
\forall [\xb,\xb+u]\subset \dom g,\ \xb_t=\xb+tu,\\
g^r(\xb_t;u)<+\infty \text{ and }
\gsu(\xb_t;u)= g^r(\xb_t;u) \text{ for all } t\in [0,1{[}\}.
\end{multline*}
\vspace*{-0.8cm}
\begin{multline*}
\LC^{\natural{n}}(X)=
\{ g\in \LC(X)\tq \dom g \text{ is convex and }
\forall [\xb,\xb+u]\subset \dom g,\ \xb_t=\xb+tu,\\
g^r(\xb_t;u)\text{ is finite and } 
\gu(\xb_t;u)= g^r(\xb_t;u) \text{ for nearly all } t\in [0,1]\}.
\end{multline*}
\vspace*{-0.8cm}
\begin{multline*}
\LACG^{\natural{a}}(X)=
\{ g\in \LACG(X)\tq \dom g \text{ is convex and }
\forall [\xb,\xb+u]\subset \dom g,\ \xb_t=\xb+tu,\\
g^r(\xb_t;u)\text{ is finite and }
\gu(\xb_t;u)= g^r(\xb_t;u)\text{ for almost all } t\in [0,1]\}.
\end{multline*}

One has 
$\Lsc^{\natural\natural}(X)\subset
\LC^{\natural{n}}(X)\subset\LACG^{\natural{a}}(X).$
The first inclusion follows from Proposition \ref{altFUSS} below.
To prove the second inclusion, let $g\in \LC^{\natural{n}}(X)$ and
$[\xb,\xb+u]\subset \dom g$. The function $\varphi:t\in [0,1]\mapsto g(\xb+tu)$
is continuous, so by Fact \ref{nat-subdiv}\,(2)
and the definition of $\LC^{\natural{n}}(X)$,
$$D^+\varphi(t)=g^r_+(\xb_t;u)\le \gu(\xb_t;u)= g^r(\xb_t;u)=D_+\varphi(t) \text{ for nearly all } t\in [0,1].$$
Therefore, $D_+\varphi(t)$ is finite and $D_+\varphi(t)=D^+\varphi(t)$
nearly everywhere on $[0,1]$, which means that $\varphi$ is
right-differentiable nearly everywhere on $[0,1]$.
We conclude that $\varphi$ is ACG$_*$ on $[0,1]$ by Fact \ref{diff-ACG}\,(2).
Hence $g\in \LACG^{\natural{a}}(X)$.
A discussion of these classes of functions,
with examples, comments and variants,
is given in the last section.
\smallbreak
We say that a function $g:X\to\Rex$ is
\textit{radially Lipschitz continuous at a point $x\in \dom g$
in the direction $u\in X$}
if the restriction of $g$ to any open line segment
${]}\xb,\xb+u{[}$ containing $x$ is locally Lipschitz
at $x$, namely,
there exist $t_0>0$ and $\lambda>0$ such that
$$
y, z \in {]}x-t_0u,x+t_0u{[} \Longrightarrow g(z)-g(y)\leq
\lambda \|z-y\|.
$$

\begin{proposition}[Radial Lipschitz continuity of functions in
$\Lsc^{\natural\natural}(X)$]\label{altFUSS}
The restriction of every $g\in \Lsc^{\natural\natural}(X)$
to any line segment $[\xb,\xb+u]\subset \dom g$ is continuous at
the endpoints and locally Lipschitz at every $x\in {]}\xb,\xb+u{[}$;
in particular, $g^r(x;u)$ is finite for all
$x\in {]}\xb,\xb+u{[}$.
\end{proposition}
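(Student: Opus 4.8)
The plan is to reduce everything to the one--dimensional restriction $\varphi(t):=g(\xb+tu)$, $t\in[0,1]$, and to control it through the monotonicity test of Theorem~\ref{monotone-test}. Since $[\xb,\xb+u]\subset\dom g$, the function $\varphi$ is real--valued, and it is lsc on $[0,1]$ because $g$ is lsc and $t\mapsto \xb+tu$ is continuous. Writing $\xb_t=\xb+tu$ and using $\xb_t+su=\xb_{t+s}$ together with the positive homogeneity $g^r(x;\lambda w)=\lambda\,g^r(x;w)$ (for $\lambda\ge 0$), I would record the identification $g^r(\xb_t;u)=D_+\varphi(t)$. By the definition of $\Lsc^{\natural\natural}(X)$ one has $\gsu(\xb_t;u)=g^r(\xb_t;u)<+\infty$ for every $t\in[0,1[$, so $\gsu(\xb_t;u)$ is finite all along the segment.

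The hard part is to convert the finiteness of the full regularization $\gsu(\xb_t;u)$ into a local upper bound on $D_+\varphi$. Here I would unwind the definition $\gsu(\xb_t;u)=\inf_{\alpha\ge 0}\limsup_{x\to\xb_t}g^r(x;u+\alpha(\xb_t-x))$: its finiteness yields some $\alpha\ge 0$, a constant $M$ and a neighbourhood of $\xb_t$ on which $g^r(x;u+\alpha(\xb_t-x))<M$. Evaluating this at the segment points $x=\xb_s$, where $\xb_t-\xb_s=(t-s)u$ and hence $u+\alpha(\xb_t-\xb_s)=(1+\alpha(t-s))u$, and using positive homogeneity together with $1+\alpha(t-s)>\tfrac12$ for $s$ near $t$, I obtain $D_+\varphi(s)<2M$ for all $s$ in a neighbourhood of $t$; at $s=t$ itself $D_+\varphi(t)=g^r(\xb_t;u)$ is finite. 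Thus $D_+\varphi$ is locally bounded above at every point of $[0,1[$. On a given compact subinterval a finite subcover then produces a uniform bound $D_+\varphi\le M$, and applying Theorem~\ref{monotone-test}\,(1) to the lsc function $t\mapsto\varphi(t)-Mt$ shows it is nonincreasing, i.e.\ $\varphi(s)-\varphi(t)\le M(s-t)$ on that subinterval: the one--sided (upper) Lipschitz estimate.

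For the matching lower estimate I would run the same argument on the reversed segment $[\yb,\yb+v]$ with $\yb=\xb+u$, $v=-u$, which coincides with $[\xb,\xb+u]$ and hence again lies in $\dom g$; applying the above to $\tilde\varphi(\tau):=\varphi(1-\tau)=g(\yb+\tau v)$ yields $\tilde\varphi(\sigma)-\tilde\varphi(\tau)\le \tilde M(\sigma-\tau)$ on compact subintervals, which after the substitution $p=1-\sigma\le q=1-\tau$ becomes $\varphi(q)-\varphi(p)\ge-\tilde M(q-p)$. Combining the two estimates shows that $\varphi$ is Lipschitz on every compact subinterval of $]0,1[$, i.e.\ $g$ is radially Lipschitz continuous at each $x\in\,]\xb,\xb+u[$, whence $g^r(x;u)$ is finite there. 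Finally, for continuity at the endpoints I would apply the upper estimate on a subinterval $[0,d]$: the neighbourhood argument works at $t=0$ because $\gsu(\xb;u)$ is finite, giving $\varphi(s)-\varphi(0)\le Ms$ and hence $\limsup_{s\to 0^+}\varphi(s)\le\varphi(0)$; together with the lower semicontinuity inequality $\liminf_{s\to 0^+}\varphi(s)\ge\varphi(0)$ this gives $\lim_{s\to 0^+}\varphi(s)=\varphi(0)$. The endpoint $t=1$ is handled symmetrically via $\tilde\varphi$ at $\tau=0$, using the finiteness of $\gsu(\xb+u;-u)$.
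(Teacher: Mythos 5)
Your proof is correct and follows essentially the same route as the paper's: both extract from the finiteness of $\gsu(\xb_t;u)$ a local upper bound on $g^r(\cdot;u)$ along the segment via the identity $u+\alpha(\xb_t-\xb_s)=(1+\alpha(t-s))u$ and positive homogeneity, then apply Theorem~\ref{monotone-test}\,(1) to a linearly tilted lsc function, handling the reverse direction by the symmetric argument with $v=-u$ and the endpoints by combining the resulting one-sided estimate with lower semicontinuity. The only differences are organizational (you pass to compact subintervals via a finite subcover where the paper argues locally around each interior point), not mathematical.
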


\begin{proof}
Let $x\in {]}\xb,\xb+u{[}$. We show that $g$ is locally Lipschitz
at $x$ relative to ${]}\xb,\xb+u{[}$.
Note that ${]}\xb,\xb+u{[}={]}\yb,\yb-u{[}$ for $\yb=\xb+u$.
By definition of the space $\Lsc^{\natural\natural}(X)$ we have
$\gsu(x;v)= g^r(x;v)<+\infty$ for $v=\pm u$. Hence,
there exists $\lambda\in\R$ such that, for $v=\pm u$,
$
\gsu(x;v)\le\lambda.
$
Let $x_t=x+tv$. Since $v+\alpha(x-x_t)= (1-\alpha t)v$, it follows that
for any $\alpha\ge 0$,
\begin{align*}
\limsup_{t\to 0} g^r(x_t;v)=\limsup_{t\to 0}\,(1-\alpha t)g^r(x_t;v)
&=\limsup_{t\to 0}\,g^r(x_t;v+\alpha(x-x_t))\\
&\le  \limsup_{x'\to x}\,g^r(x';v+\alpha(x-x')).
\end{align*}
Hence, for $v=\pm u$,
$$
\limsup_{t\to 0} g^r(x_t;v)\le \gsu(x;v)\le \lambda.
$$
We derive that there exists $t_0>0$ such that for $v=\pm u$,
\begin{equation}\label{five}
g^r(x';v)\le \lambda \text{ for all } x'\in {]}x-t_0u,x+t_0u{[}.
\end{equation}

Let $y,z$ be arbitrary points in ${]}x-t_0u,x+t_0u{[}$. There
exist $t_1\in [0,t_0{[}$ and $v\in\{u,-u\}$ such that $z=y+t_1v$.
Consider the lsc function $t\mapsto \varphi(t)=g(y+tv)-\lambda t$.
Using \eqref{five}, we see that
$$
D_+\varphi(t)=g^r(y+tv;v)-\lambda\le 0 \text{ for all $t\in [0,t_1]$}.
$$
The Monotonicity Theorem \ref{monotone-test}\,(1) then shows
that $\varphi(t_1)\le \varphi(0)$, in other words,
$$
g(z)-g(y)\le \lambda t_1=(\lambda/\|u\|)\|z-y\|.
$$
Thus the local Lipschitz property holds as long as
$y$ and $z$ belong to ${]}x-t_0u,x+t_0u{[}$.
From this it follows that
$-\lambda\le g^r(x;u)\le \lambda$, hence $g^r(x;u)$ is finite.
\smallbreak
We now prove that $g$ is continuous at $\xb$ relative to $[\xb,\xb+u]$.
The argument is similar. Since $\gsu(\xb;u)= g^r(\xb;u)<+\infty$,
there exists $\lambda\in\R$ such that $\gsu(\xb;u)\le\lambda.$
As above, we infer that
$$
\limsup_{t\searrow 0} g^r(\xb+tu;u)\le \gsu(\xb;u)\le \lambda.
$$
So there exists $t_0>0$ such that
\begin{equation*}
g^r(\xb+tu;u)\le \lambda \text{ for all } t\in [0,t_0{[}.
\end{equation*}
Then, we derive from the Monotonicity Theorem \ref{monotone-test}\,(1)
that, for every $t\in [0,t_0{[}$,
$$
g(\xb+tu)-g(\xb)\le \lambda t,
$$
proving that $g$ is continuous at $\xb$ relative to $[\xb,\xb+u]$.
Since $[\xb,\xb+u]=[\yb,\yb-u]$ for $\yb=\xb+u$, the continuity
of $g$ at $\xb+u$  relative to $[\xb,\xb+u]$ follows as well.
\end{proof}

Typical examples of functions in $\Lsc^{\natural\natural}(X)$
are the proper lsc convex functions (see the last section).
As an illustration of Proposition \ref{altFUSS}, consider
the proper lsc convex function $g$ defined on $X=\R$ by
$g(x)=-\sqrt{x}$ for $x\in [0,1]$ and $g(x)=+\infty$ otherwise.
Then $g$ is continuous on $[0,1]$, locally Lipschitz at every
point in ${]}0,1{[}$ but not locally Lipschitz at $\xb=0$.
\smallbreak
Let $\Fcal(X)$ be a class of lsc functions on $X$.
We say that a subclass $\mathcal{G}(X)\subset \Fcal(X)$ is \textit{subdifferentially
determined in $\Fcal(X)$} if for every $g\in \mathcal{G}(X)$, $f\in \Fcal(X)$
and $\Omega\subset X$ open convex with $\Omega\cap\dom f\ne\emptyset$,
one has
\begin{equation*}\label{subdiffdet-revised}
\del f(x)\subset \del g(x) \text{ for all } x\in \Omega
\Longrightarrow f=g + Const  \text{ on  }\Omega\cap\dom f.
\end{equation*}

\smallbreak

Each version of the Monotonicity Theorem \ref{monotone-test}
naturally leads to a corresponding
version for the subdifferential determination property:

\begin{theorem} [Subdifferential determination of functions]
\label{determination-revisited}
Let $X$ be a Banach space.
\smallbreak
{\rm (1)} The class $\Lsc^{\natural\natural}(X)$ is subdifferentially
determined in $\Lsc(X)$.
\smallbreak
{\rm (2)} The class $\NFUSS(X)$ is subdifferentially determined in $\LC(X)$.
\smallbreak
{\rm (3)} The class $\AFUSS(X)$ is subdifferentially determined in $\LACG(X)$.
\end{theorem}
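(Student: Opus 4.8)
The plan is to reduce each statement to a one-dimensional monotonicity problem along line segments and to invoke the matching part of the Monotonicity Theorem~\ref{monotone-test}. Fix $g$ in the relevant subclass, $f$ in the ambient class, and $\Omega$ open convex with $\del f\subset \del g$ on $\Omega$. On a segment $[\xb,\xb+u]\subset \Omega\cap\dom f\cap\dom g$ set $\varphi(t):=f(\xb+tu)-g(\xb+tu)$ for $t\in[0,1]$, and write $\psi(t)=f(\xb+tu)$, $\chi(t)=g(\xb+tu)$, $\xb_t=\xb+tu$, so that $D_+\psi(t)=f^r(\xb_t;u)$, $D^+\psi(t)=f^r_+(\xb_t;u)$ and $D_+\chi(t)=g^r(\xb_t;u)$. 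I would show that $\varphi$ is nonincreasing; running the same reasoning in the direction $-u$ (legitimate, since all the classes are defined over every segment) shows $\varphi$ is nondecreasing as well, hence constant, and convexity of $\Omega$ then propagates this to one additive constant on $\Omega\cap\dom f$.

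The heart of the matter is the pointwise estimate $D^+\varphi(t)\le 0$ (equivalently $D_+\varphi(t)\le 0$). First, $\del f\subset\del g$ on the open set $\Omega$ gives $f^\del(x;v)\le g^\del(x;v)$ for all $x\in\Omega$ and all $v$, since the support function is monotone under inclusion. Feeding this into the representation formulas of Fact~\ref{nat-subdiff} (whose limsups eventually involve only points of $\Omega$) yields $\fu(\xb_t;u)\le\gu(\xb_t;u)$ and $\fsu(\xb_t;u)\le\gsu(\xb_t;u)$. For statements (2) and (3), $f$ restricted to the segment is continuous, so the hypothesis of Fact~\ref{nat-subdiv}\,(2) holds at every $t$ and $f^r_+(\xb_t;u)\le\fu(\xb_t;u)$; at the points where $g$ is upper semismooth, $\gu(\xb_t;u)=g^r(\xb_t;u)$ is finite, whence $f^r_+(\xb_t;u)\le\fu(\xb_t;u)\le\gu(\xb_t;u)=g^r(\xb_t;u)$. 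Using $\limsup(a_s-b_s)\le\limsup a_s-\liminf b_s$, this gives $D^+\varphi(t)\le D^+\psi(t)-D_+\chi(t)=f^r_+(\xb_t;u)-g^r(\xb_t;u)\le 0$ at such $t$.

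For statement~(1), where $f$ is only lsc, I would bound $D_+\varphi$ directly via $\liminf(a_s-b_s)\le\liminf a_s-\liminf b_s$, getting $D_+\varphi(t)\le f^r(\xb_t;u)-g^r(\xb_t;u)$ with $g^r(\xb_t;u)$ finite by Proposition~\ref{altFUSS}. If $f(\xb_t)=\liminf_{s\searrow0}f(\xb_t+su)$, Fact~\ref{nat-subdiv}\,(2) applies and $f^r(\xb_t;u)\le f^r_+(\xb_t;u)\le\fsu(\xb_t;u)\le\gsu(\xb_t;u)=g^r(\xb_t;u)$; otherwise the difference quotients of $f$ tend to $-\infty$ along a sequence, so $f^r(\xb_t;u)=-\infty$ and the bound is trivially $\le 0$. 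Thus $D_+\varphi(t)\le 0$ for every $t\in[0,1[$, and since $\varphi$ is lsc (difference of the lsc $\psi$ and the continuous $\chi$, the latter by Proposition~\ref{altFUSS}), Theorem~\ref{monotone-test}\,(1) shows $\varphi$ nonincreasing. For statement~(2), $\varphi$ is continuous (both restrictions lie in $\LC$) and $D^+\varphi(t)\le 0$ for nearly all $t$ (the set where $\gu=g^r$ fails is countable), so Theorem~\ref{monotone-test}\,(2) applies. For statement~(3), $\varphi$ is ACG (a difference of ACG functions is ACG) and $D_+\varphi(t)\le D^+\varphi(t)\le 0$ for almost all $t$, so Theorem~\ref{monotone-test}\,(3) applies.

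I expect the main obstacle to be the domain bookkeeping needed to pass from segments to all of $\Omega\cap\dom f$. The density of $\del f$ (Fact~\ref{densitytheorem}) together with $\del f\subset\del g$ shows $\Omega\cap\dom g\ne\emptyset$, and convexity of $\dom g$ lets one confine the above to segments inside $\dom f\cap\dom g$. The delicate point is to verify $\Omega\cap\dom f\subset\dom g$, i.e.\ that $g$ cannot jump to $+\infty$ inside $\dom f$: this should be settled by combining the two-sided monotonicity of $\varphi$ (forcing $\varphi$ to stay constant, hence finite, on the maximal sub-segment lying in $\dom g$) with the lower semicontinuity of $g$ and the exceptional-set control of the appropriate version of Theorem~\ref{monotone-test}. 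Once this is in place, $f-g$ is finite and locally constant on the convex set $\Omega\cap\dom f$, hence equal to a single additive constant there.
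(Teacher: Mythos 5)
Your overall strategy --- reduce to the real line via Fact~\ref{nat-subdiff}, compare the regularized subderivatives of $f$ and $g$ using $f^\del\le g^\del$, and invoke the matching version of Theorem~\ref{monotone-test} along segments --- is exactly the paper's. But two of your steps do not go through as written. First, in case (1) your ``otherwise'' branch is backwards: if $f(\xb_t)\ne\liminf_{s\searrow0}f(\xb_t+su)$ then, $f$ being lsc, necessarily $\liminf_{s\searrow0}f(\xb_t+su)>f(\xb_t)$, so the difference quotients blow up and $f^r(\xb_t;u)=+\infty$, not $-\infty$; your bound $D_+\varphi(t)\le f^r(\xb_t;u)-g^r(\xb_t;u)$ then gives nothing. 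The detour through Fact~\ref{nat-subdiv}\,(2) is unnecessary here: the paper uses the unconditional inequality $f^r(\xb_t;u)\le\fsu(\xb_t;u)$ (valid by the definition of $\fsu$), and then $\fsu(\xb_t;u)\le\gsu(\xb_t;u)=g^r(\xb_t;u)<+\infty$ rules out the bad case altogether.

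Second, the ``domain bookkeeping'' you defer is the actual crux, and your plan for it does not work as stated. By confining the monotonicity argument to segments contained in $\Omega\cap\dom f\cap\dom g$ you lose the tool needed to prove $\Omega\cap\dom f\subset\dom g$: since $\dom f$ need not be convex, a hypothetical point of $\Omega\cap\dom f\setminus\dom g$ cannot in general be joined to $\dom f\cap\dom g$ by such a segment, and ``two-sided monotonicity on the maximal sub-segment lying in $\dom g$'' yields no contradiction by itself. The paper's resolution is structurally different: it runs the comparison along segments $[\xb,\yb]$ with $\xb\in\Omega\cap\dom\del f$ and $\yb\in\Omega\cap\dom f\cap\dom g$; such a segment lies in $\dom g$ (convexity of $\dom g$, since $\del f\subset\del g$ gives $\dom\del f\subset\dom g$) but \emph{not} necessarily in $\dom f$, and the points $\xb_t\notin\dom f$ cost nothing because there $f^r(\xb_t;u)=-\infty$ by convention. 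This produces the one-sided inequality $f(\yb)-f(\xb)\le g(\yb)-g(\xb)$, which is then extended to all $\xb\in\Omega\cap\dom f$ via Fact~\ref{densitytheorem} and the lower semicontinuity of $g$; the extended inequality immediately forces $g(\xb)<+\infty$, i.e.\ $\Omega\cap\dom f\subset\dom g$, and equality (hence the additive constant) follows by swapping $\xb$ and $\yb$. Two smaller points you should also secure: the finiteness of $g^r(\xb;u)$ at the left endpoint (the paper gets $g^r(\xb;u)=\gsu(\xb;u)\ge g^\del(\xb;u)>-\infty$ from $\xb\in\dom\del g$), without which the subtraction $D_+\varphi-D_+\gamma$ at $t=0$ is ill-defined; and in case (1) the monotonicity test is applied to $\varphi-\gamma$ with $\gamma$ continuous by Proposition~\ref{altFUSS}, which is what makes that difference lsc.
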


\begin{proof}
The structure of the proof is the same for each case and is similar to
that of \cite[Theorem 10]{Las18b}.
We give the details for the case (1) and a sketch for
the other (simpler) cases.
\medbreak
\textit{Case} (1).
Let $g\in \Lsc^{\natural\natural}(X)$ and $f\in \Lsc(X)$,
and let $\Omega\subset X$ be
an open convex subset with $\Omega\cap\dom f\ne\emptyset$. Assume
\begin{equation}\label{final0}
\del f(x)\subset \del g(x) \quad \mbox{for all } x\in\Omega.
\end{equation}
Without loss of generality, we may consider that
$\Omega\cap \dom f$ contains two distinct points.
By Fact \ref{densitytheorem}, the set
$\Omega\cap \dom \del f$ is dense in $\Omega\cap \dom f$, it
therefore also contains two distinct points.
Observe that by \eqref{final0} $\Omega\cap \dom \del f
=\Omega\cap \dom \del f\cap \dom \del g\subset
\Omega\cap \dom f\cap\dom g$.
\smallbreak

\textit{First step.}
Let $\xb\in \Omega\cap \dom \del f$ and
$\yb\in \Omega\cap \dom f\cap\dom g$ with $\xb\ne\yb$.
Note that $[\xb,\yb]\subset \Omega\cap \dom g$.
Put $u:=\yb-\xb$ and let $t\in [0,1{[}$. Then
$\xb_t:=\xb+tu\in [\xb,\xb+u{[}=[\xb,\yb{[}\subset \Omega\cap \dom g$.
If $\xb_t\not\in\dom f$, $f^r(\xb_t;u)=-\infty$, hence
$f^r(\xb_t;u)\le g^r(\xb_t;u)$. Otherwise,
$\xb_t\in\Omega\cap \dom f\cap\dom g$, so by
Fact \ref{nat-subdiff}
\begin{align*}
\fsu(\xb_t;u)=
\inf_{\alpha\ge 0}\limsup_{x\to \xb_t}\,f^\del(x;u+\alpha(\xb_t-x))\\
\gsu(\xb_t;u)=\inf_{\alpha\ge 0}\limsup_{x\to \xb_t}\,g^\del (x;u+\alpha(\xb_t-x)),
\end{align*}
which entails from \eqref{final0} that 
$\fsu(\xb_t;u)\le \gsu(\xb_t;u).$
But $f^r(\xb_t;u)\le \fsu(\xb_t;u)$ by definition of $\fsu(\xb_t;u)$
and $\gsu(\xb_t;u)= g^r(\xb_t;u)$ by assumption on $g$.
Hence $f^r(\xb_t;u)\le g^r(\xb_t;u)$ also in this case.
Thus, we have just shown that
\begin{equation}\label{fin4}
f^r(\xb_t;u)\le g^r(\xb_t;u) \text{ for all $t\in [0,1{[}$.}
\end{equation}
\ps
\textit{Second step.} 
By Proposition \ref{altFUSS}, $g^r(\xb_t;u)$ is finite for
every $t\in {]}0,1{[}$. On the other hand,
$g^r(\xb;u)< +\infty$ and since $\xb\in \Omega\cap \dom \del g$,
we infer that $g^r(\xb;u)=\gsu(\xb;u)\ge g^\del (\xb;u)>-\infty$,
so $g^r(\xb;u)$ is finite as well.
Rewriting \eqref{fin4} with the functions $\varphi:t\in [0,1]\mapsto f(\xb_t)$
and $\gamma:t\in [0,1]\mapsto g(\xb_t)$,
we get
\begin{equation}\label{fin4b}
D_+\varphi(t)\le D_+\gamma(t) \text{ for all $t\in [0,1{[}$}.
\end{equation}
Since $D_+\gamma(t)$ is finite everywhere on $[0,1{[}$, one has
$D_+(\varphi-\gamma)(t)\le D_+\varphi(t)- D_+\gamma(t)$ everywhere on $[0,1{[}$,
hence \eqref{fin4b} entails
\begin{equation*}
D_+(\varphi-\gamma)(t)\le 0  \text{ for all $t\in [0,1{[}$}.
\end{equation*}
Note that $\varphi$ is lsc and $\gamma$ is continuous by Proposition \ref{altFUSS}, so the function $\varphi-\gamma$ is lsc.
Applying the Monotonicity Theorem \ref{monotone-test}\,(1),
we obtain that
$(\varphi-\gamma)(1)\le (\varphi-\gamma)(0)$.
Finally we have proved that
\begin{equation}\label{fin1}
f(\yb)-f(\xb)\le g(\yb)-g(\xb)
\text{ for all $\xb\in \Omega\cap \dom \del f$
and $\yb\in \Omega\cap \dom f\cap\dom g$.}
\end{equation}
This inequality can be extended to all $\xb\in \Omega\cap \dom f$
and $\yb\in \Omega\cap \dom f\cap\dom g$. Indeed,
by Fact \ref{densitytheorem}, there is
a sequence $(\xb_n)_n\subset \Omega\cap \dom \del f$
such that $\xb_n\to \xb$
and $f(\xb_n)\to f(\xb)$. Since  $g$ is lower semicontinuous at $\xb$,
passing to the limit in the inequality
$$f(\yb)-f(\xb_n)\le g(\yb)-g(\xb_n),$$
we see that \eqref{fin1} holds for all $\xb\in \Omega\cap \dom f$. 
From this we derive that every point $\xb$ in $\Omega\cap\dom f$
belongs to $\Omega\cap\dom g$, so $\Omega\cap \dom f\cap\dom g=\Omega\cap \dom f$.
We conclude that
$$
f(\yb)-f(\xb)\le g(\yb)-g(\xb) \text{ for all } \xb,\yb \in \Omega\cap\dom f,
$$
hence in fact
$$
f(\yb)-f(\xb)= g(\yb)-g(\xb) \text{ for all } \xb,\yb \in \Omega\cap\dom f,
$$
which means that $f=g + {\rm const}$  on $\Omega\cap \dom f$.

\medbreak
\textit{Case} (2) (\textit{Case} (3), respectively).
Let $g\in \NFUSS(X)$ and $f\in \LC(X)$
($g\in \AFUSS(X)$ and $f\in \LACG(X)$, respectively),
and let $\Omega\subset X$ be an open convex subset with
$\Omega\cap\dom f\ne\emptyset$. Assume that the inclusion \eqref{final0} holds.

Let $\xb\in \Omega\cap \dom \del f=
\Omega\cap \dom \del f\cap \dom \del g$
and $\yb\in \Omega\cap \dom f\cap\dom g$ with $\xb\ne\yb$.
Note that $[\xb,\yb]\subset \Omega\cap \dom g$.
As in Case (1), we let $u:=\yb-\xb$ and for $t\in [0,1]$,
$\xb_t:=\xb+tu\in \Omega\cap \dom g$.
If $\xb_t\not\in\dom f$, $f^r_+(\xb_t;u)=-\infty$, hence
$f^r_+(\xb_t;u)\le g^r(\xb_t;u)$. Otherwise,
$\xb_t\in\Omega\cap \dom f\cap\dom g$, so proceeding as
in Case (1), we derive from
Fact \ref{nat-subdiff} and \eqref{final0} that
$\fu(\xb_t;u)\le \gu(\xb_t;u).$
But by Fact \ref{nat-subdiv}\,(2), $f^r_+(\xb_t;u)\le \fu(\xb_t;u)$,
and by assumption on $g$, $\gu(\xb_t;u)= g^r(\xb_t;u)\in\R$ nearly (almost, respectively) everywhere on $[0,1]$.
Hence finally
\begin{equation}\label{fin44}
g^r(\xb_t;u) \text{ is finite and }
f^r_+(\xb_t;u)\le g^r(\xb_t;u)
\text{ for nearly (almost, resp.) all $t\in [0,1]$.}
\end{equation}
Rewriting \eqref{fin44} with the functions
$\varphi:t\in [0,1]\mapsto f(\xb_t)$ and
$\gamma:t\in [0,1]\mapsto g(\xb_t)$,
we get
\begin{equation}\label{fin44b}
D_+\gamma(t) \text{ is finite and }
D^+\varphi(t)\le D_+\gamma(t)
\text{ for nearly (almost, resp.) all $t\in [0,1]$,}
\end{equation}
hence,
\begin{equation}\label{fin44c}
D^+(\varphi-\gamma)(t)\le 0
\text{ for nearly (almost, resp.) all $t\in [0,1]$.}
\end{equation}
Applying the appropriate version of the Monotonicity
Theorem \ref{monotone-test} we conclude that
$\varphi(1)-\gamma(1)\le \varphi(0)-\gamma(0)$.
Thus we have proved that
$$
f(\yb)-f(\xb)\le g(\yb)-g(\xb)
\text{ for all $\xb\in \Omega\cap \dom \del f$
and $\yb\in \Omega\cap \dom f\cap\dom g$.}
$$
The rest of the proof goes in the same way as in Case (1).
\end{proof}

We now proceed with the subdifferential representation issue.
To this end, we introduce the class of almost upper semismooth functions
in the restricted sense:

\vspace*{-0.8cm}
\begin{multline*}
\LACG_*^{\natural{a}}(X)=
\{ g\in \LACG_*(X)\tq \dom g \text{ is convex and }
\forall [\xb,\xb+u]\subset \dom g,\ \xb_t=\xb+tu,\\ 
\gu(\xb_t;u)= g^r(\xb_t;u) \text{ for almost all } t\in [0,1]\}.
\end{multline*}

One has $\LC^{\natural{n}}(X)\subset\LACG_*^{\natural{a}}(X)
\subset\LACG^{\natural{a}}(X)$.
Indeed, we have already observed that  
each function in $\LC^{\natural{n}}(X)$ belongs to $\LACG_*(X)$
so the first inclusion holds.
The second inclusion follows from the fact that
for any $g\in \LACG_*^{\natural{a}}(X)$ and $[\xb,\xb+u]\subset \dom g$,
the function $\varphi:t\in [0,1]\mapsto g(\xb+tu)$
is differentiable almost everywhere on $[0,1]$ by Fact \ref{diff-ACG}\,(1),
so $g^r(\xb_t;u)$ is finite for almost all $t\in [0,1]$.

\begin{theorem}[Subdifferential representation of functions] \label{recover}
Let $X$ be a Banach space.
Any $g\in \LACG_*^{\natural{a}}(X)$ can be
represented through its subdifferential via the integration formula
\begin{gather*}
g(\xb+u)-g(\xb)=\int_{0}^1 \gu(x_t;u) dt,
\quad \forall [\xb, \xb+u{[}\subset \dom g,\ \xb_t=\xb+t u,
\end{gather*}
where
$\displaystyle \gu(\xb_t;u)=\inf_{\alpha\ge 0}\limsup_{x\to_u\xb_t}
g^\del(x;u+\alpha (\xb_t-x))$ for every $t\in [0,1]$.
\end{theorem}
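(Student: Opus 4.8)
The plan is to reduce the identity to the one-dimensional representation Theorem~\ref{recov-subdiv-ACG} by restricting $g$ to the segment. Fix $[\xb,\xb+u{[}\subset\dom g$ and set $\varphi(t):=g(\xb_t)=g(\xb+tu)$ for $t\in[0,1]$. Since $g$ is lsc and $t\mapsto\xb+tu$ is continuous, $\varphi$ is lsc on $[0,1]$ and finite on $[0,1{[}$. For every $s\in{]}0,1{[}$ the closed segment $[\xb,\xb+su]$ lies in $\dom g$, so by definition of $\LACG_*(X)$ the restriction $\varphi|_{[0,s]}$ is ACG$_*$; by Fact~\ref{diff-ACG}\,(1) it is then differentiable almost everywhere, so $D_+\varphi=\varphi'$ is finite almost everywhere and the Henstock-Kurzweil integrals below are well defined. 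Theorem~\ref{recov-subdiv-ACG} yields
\begin{equation*}
\varphi(s)-\varphi(0)=\int_0^s D_+\varphi(t)\,dt,\qquad s\in{]}0,1{[}.
\end{equation*}

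The core of the argument is to identify the integrand with $\gu(\xb_t;u)$ almost everywhere. Directly from the definitions of the Dini derivative and of the radial subderivative,
\begin{equation*}
D_+\varphi(t)=\liminf_{h\searrow0}\frac{g(\xb_t+hu)-g(\xb_t)}{h}=g^r(\xb_t;u),\qquad t\in[0,1{[}.
\end{equation*}
On the other hand $g\in\AUSS(X)$, so its defining property applied to the closed segment $[\xb,\xb+su]$, of direction $su$, gives $\gu(\xb_{rs};su)=g^r(\xb_{rs};su)$ for almost all $r\in[0,1]$. As $g^r$ and $\gu$ are positively homogeneous in the direction variable (immediate from their definitions), this amounts to $\gu(\xb_{rs};u)=g^r(\xb_{rs};u)$ for almost all $r$, hence, after the null-set-preserving change of variable $t=rs$, to $\gu(\xb_t;u)=g^r(\xb_t;u)$ for almost all $t\in[0,s]$. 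Combining the two displays, $D_+\varphi(t)=\gu(\xb_t;u)$ almost everywhere on $[0,s{[}$, whence
\begin{equation*}
g(\xb+su)-g(\xb)=\int_0^s \gu(\xb_t;u)\,dt,\qquad s\in{]}0,1{[}.
\end{equation*}

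It remains to let $s\to1$. If $\xb+u\in\dom g$, then $[\xb,\xb+u]\subset\dom g$ and the whole argument applies verbatim on $[0,1]$ --- the defining property of $\AUSS(X)$ now being invoked directly for the segment $[\xb,\xb+u]$, so that no homogeneity is needed --- producing the stated formula with a genuine integral over $[0,1]$. If $\xb+u\notin\dom g$, then $\varphi(1)=g(\xb+u)=+\infty$, and lower semicontinuity of $\varphi$ forces $\lim_{s\to1}\varphi(s)=+\infty$; thus the left-hand side tends to $+\infty$ and the identity persists in the extended sense, the integral $\int_0^1\gu(\xb_t;u)\,dt$ being read as the divergent limit $\lim_{s\to1}\int_0^s\gu(\xb_t;u)\,dt$. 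Finally, Fact~\ref{nat-subdiff} gives $\gu(\xb_t;u)=\inf_{\alpha\ge0}\limsup_{x\to_u\xb_t}g^\del(x;u+\alpha(\xb_t-x))$, so the right-hand side is expressed purely through the subdifferential $\del g$; this is the sense in which $g$ is recovered from its subdifferential.

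I expect the only genuine difficulty to be at the right endpoint, where the closed-segment hypothesis built into the definition of $\AUSS(X)$ must be reconciled with the half-open segment of the statement. Everything over an interval $[0,s]$ with $s<1$ is routine once Theorem~\ref{recov-subdiv-ACG} and the almost-everywhere equality $g^r=\gu$ are in hand; the care is concentrated in the exhaustion by subsegments, where positive homogeneity transfers the defining property from direction $su$ to direction $u$, and in the limit $s\to1$, where lower semicontinuity supplies the correct (possibly infinite) limiting value of $g(\xb+su)$.
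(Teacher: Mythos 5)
Your proof is correct and follows essentially the same route as the paper's: restrict $g$ to the segment, apply the one-dimensional representation Theorem~\ref{recov-subdiv-ACG} to $\varphi(t)=g(\xb+tu)$, replace $D_+\varphi(t)=g^r(\xb_t;u)$ by $\gu(\xb_t;u)$ almost everywhere using the defining property of $\AUSS(X)$, and conclude with Fact~\ref{nat-subdiff}. Your additional care at the right endpoint (exhaustion by $[0,s]$, positive homogeneity, and the lower-semicontinuity limit when $\xb+u\notin\dom g$) addresses the half-open hypothesis $[\xb,\xb+u{[}\subset\dom g$, which the paper's one-paragraph proof silently treats as the closed-segment case.
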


\begin{proof}
The function $\varphi:t\mapsto g(\xb_t)$
is ACG$_*$ on $[0,1]$ and $D_+\varphi(t)=g^r(\xb_t;u)$.
It therefore follows from the Subderivative Representation
Theorem \ref{recov-subdiv-ACG} that
$$g(\xb+u)-g(\xb)=\int_{0}^1 g^r(\xb_t;u) dt.$$
But $g^r(\xb_t;u)=\gu(\xb_t;u)$
almost everywhere on $[0,1]$ and by Fact \ref{nat-subdiff},
\begin{align*}
\gu(\xb_t;u)=\inf_{\alpha\ge 0}\limsup_{x\to_u\xb_t}
g^\del(x;u+\alpha (\xb_t-x)).\tag*{\qedhere}
\end{align*}
\end{proof}

\section{Examples, comments and variants}\label{examples}

\subsection{The space \texorpdfstring{$\Lsc^{\natural\natural}(X)$}{sss}}\label{fsuss}

Let $g:X\to\R$ be locally Lipschitz on a open convex subset
$U\subset X$. Then, for every $x\in U$ and $u\in X$,
$g^\circ(x;u)$ is finite and $g^\circ(x;u)=\gsu(x;u)$
(see the diagram \eqref{diagramlip}). So the
equality $g^r(x;u)=\gsu(x;u)$ for every $u\in X$ is equivalent to the
(Clarke) regularity of $g$ at $x$, i.e.\ $g^r(x;u)=g^\circ(x;u)$
for every $u\in X$.
In other words, the locally Lipschitz functions in $\Lsc^{\natural\natural}(X)$
are precisely the (Clarke) regular functions.
\smallbreak
Besides the locally Lipschitz regular functions,
the space $\Lsc^{\natural\natural}(X)$ contains
the proper lsc convex functions,
the proper lsc approximately convex functions
(hence also the lower-C$^1$ functions)
and (more generally) the directionally stable functions in the sense
of Thibault-Zagrodny \cite{TZ95}.
See \cite{Las18b} for proofs and discussion.
\smallbreak
We don't know whether the space $\Lsc^{\natural\natural}(X)$
contains the lsc radially Lipschitz continuous functions
which are regular in the sense of Rockafellar,
i.e.\ $g^d(x;u)=g^\uparrow(x;u)$ for every $u\in X$.
We recall that for a convex lsc $g$ one has,
for every $x, x+u \in \dom g$,
$$
g^d(x;u)=g^\uparrow(x;u)\le g^r(x;u)=\gsu(x;u)<+\infty
$$
where the inequality $\le$ may be strict.
\subsection{The space \texorpdfstring{$\NFUSS(X)$}{nss}}\label{nfuss}

The space $\NFUSS(X)$ contains the Mifflin semismooth functions
like $x\in\R\mapsto -|x|$ (see \cite{Las18b}).
It also contains non-locally Lipschitz functions like
$x\in\R\mapsto -\sqrt{|x|}$
or $x\in\R\mapsto \sqrt{|x|}$,
and even non-absolutely continuous functions like
$f:\R\to\R$ given by
$$
f(x):=\left\{
\begin{array}{ll}
x\sin (1/x) & \mbox{if~~} x\ne 0\\
0 & \mbox{if~~} x=0
\end{array}
\right..
$$
These functions are not in $\Lsc^{\natural\natural}(X)$.
\subsection{The space \texorpdfstring{$\AUSS(X)$}{ass}}\label{auss}

The following classes of locally Lipschitz
functions are considered in Thibault-Zagrodny \cite{TZ10}:

- the \textit{segmentwise essentially smooth} functions
\cite[p.\ 2305]{TZ10},
that is,
locally Lipschitz functions $g$ defined on a nonempty open convex subset 
$\Omega\subset X$, such that for every $\xb,u\in X$ with
$[\xb,\xb+u]\subset\Omega$,
\begin{equation}\label{defes}
g^\circ(\xb+tu;u)= -g^\circ(\xb+tu;-u)\mbox{ for almost all $t\in [0,1]$,}
\end{equation}

- the \textit{segmentwise essentially subregular} functions
\cite[Definition 4.6]{TZ10},
that is, locally Lipschitz functions $g$ such that, instead of \eqref{defes}
one has
\begin{equation}\label{dereg}
g^\circ(\xb+tu;u)= g^r(\xb+tu;u)\mbox{ for almost all $t\in [0,1]$.}
\end{equation}

In fact, \eqref{defes} and \eqref{dereg} are equivalent
(see the proof of \cite[Lemma 2.1]{BM98a}),
so the two classes are identical.
They contain the class of \textit{arcwise essentially smooth}
functions previously studied by Borwein-Moors \cite{BM98a}.
A remarkable feature of these classes is that they are stable by composition,
addition and multiplication. For more details,
see \cite{BM98a,TZ10,Zaj12} and the references therein.

These functions are contained in a more sophisticated
class of functions introduced by L. Thibault and D. Zagrodny in \cite{TZ10}:
given a subdifferential $\partial$, a lsc function
$g:X\to\xR$ is called 
\textit{$\del$-essentially directionally smooth} (\textit{eds} for
short) on a nonempty open convex subset $\Omega\subset X$,
provided that (simplified version)
for every $u, v\in \Omega \cap\dom g$
with $v\ne u$, the following properties hold:
\begin{itemize}\itemsep-1ex\topsep0pt
\item[(i)] the function $t\mapsto g_{u,v} (t):= g(u+t(v-u))$ is finite and
continuous on $[0, 1]$;
\item[(ii)] there are real numbers $0=t_0<\cdots<t_p=1$ such that the
function $t\mapsto g_{u,v} (t)$ is absolutely continuous on each closed
interval included in $[0,1]\setminus \{t_0,t_1,\ldots,t_p\}$;
\item[(iii)] for every $\mu>0$ there exists a subset $T_\mu \subset [0, 1]$
of full Lebesgue measure (i.e.\ of Lebesgue measure 1)
such that for every $t\in T_\mu$ and every
sequence $((x_k, x^*_k ))_k \subset \partial g$ with $x_k \to x(t):=u+t(v-u))$,
there is some $w\in {]}x(t), v]$ for which
\begin{equation*}
\limsup_{k\to \infty} \la x^*_k, w-x_k\ra \le \|w-x(t)\| \left (\|v-u\|^{-1} g^r_{u;v}(t;1)+\mu\right ).
\end{equation*}
\end{itemize}

\begin{proposition}[eds versus $\AUSS(X)$]\label{eds-uss}
Each eds function $g:X\to\xR$ belongs to \LACG$_*$(X) and
satisfies: for every $[\xb,\xb+u]
\subset \dom g$,
$\gsu(\xb+tu;u)= g^r(\xb+tu;u)$ for almost all $t\in [0,1]$. Hence,
the class of eds functions is contained in $\AUSS(X)$.
\end{proposition}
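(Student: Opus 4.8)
The plan is to check the three conditions defining $\AUSS(X)=\LACG_*^{\natural{a}}(X)$: that $g$ lies in $\LACG_*(X)$, that $\dom g$ is convex, and that $\gu(\xb_t;u)=g^r(\xb_t;u)$ for almost every $t\in[0,1]$ on each segment $[\xb,\xb+u]\subset\dom g$. I would in fact prove the stronger a.e.\ equality $\gsu(\xb_t;u)=g^r(\xb_t;u)$ asserted in the statement, which yields the $\AUSS$ condition for free: once $g$ is known to be continuous along the segment (established next), the starred arrow of diagram \eqref{diagram} applies and gives $g^r(\xb_t;u)\le\gu(\xb_t;u)\le\gsu(\xb_t;u)$, so an a.e.\ equality of the two extreme terms forces $\gu(\xb_t;u)=g^r(\xb_t;u)$ a.e. Throughout I identify the endpoints $\xb$ and $\xb+u$ with the points playing the roles of $u,v$ in the eds definition, so that $g^r_{u,v}(t;1)=g^r(\xb_t;u)$ and $\|v-u\|=\|u\|$.

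Fix a segment $[\xb,\xb+u]\subset\dom g$ and set $\varphi(t):=g(\xb+tu)$. By (i) the function $\varphi$ is finite and continuous on $[0,1]$; finiteness on the whole interval shows $[\xb,\xb+u]\subset\dom g$, and since this holds for every admissible pair of endpoints, $\dom g$ is convex. By (ii) there are $0=t_0<\dots<t_p=1$ such that $\varphi$ is AC on every closed subinterval of $[0,1]\setminus\{t_0,\dots,t_p\}$. Combined with continuity, the observation recorded after Theorem \ref{monotone-test} shows that $\varphi$ is ACG$_*$ on each $[t_j,t_{j+1}]$ (it is AC$_*$ on the singletons $\{t_j\},\{t_{j+1}\}$ and on each $[t_j+1/n,t_{j+1}-1/n]$), hence ACG$_*$ on all of $[0,1]$ by continuity at the junctions. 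Thus $g\in\LACG_*(X)$ and $\varphi$ is continuous, as required above.

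It remains to prove $\gsu(\xb_t;u)\le g^r(\xb_t;u)$ for almost all $t$. By Fact \ref{nat-subdiff}, $\gsu(\xb_t;u)=\inf_{\alpha\ge 0}\limsup_{x\to\xb_t}g^\del(x;u+\alpha(\xb_t-x))$, where $g^\del(x;\cdot)$ is the support function of $\del g(x)$. I first translate (iii): a point $w\in{]}\xb_t,\xb+u]$ is $w=\xb_t+\sigma u$ with $\sigma\in{]}0,1-t]$, so that $w-x_k=\sigma u+(\xb_t-x_k)$ and $\|w-\xb_t\|=\sigma\|u\|$; writing $\beta:=g^r(\xb_t;u)+\|u\|\mu$, condition (iii) becomes the assertion that for $t\in T_\mu$ and every sequence $(x_k,x_k^*)\subset\del g$ with $x_k\to\xb_t$ there is some $\sigma\in{]}0,1-t]$ with
\[
\limsup_k\,\big(\la x_k^*,u\ra+(1/\sigma)\la x_k^*,\xb_t-x_k\ra\big)\le\beta.
\]
The mechanism is then as follows. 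For any sequence realizing the outer limsup with $\la x_k^*,\xb_t-x_k\ra\to 0$, the left-hand side above tends to $\lim_k\la x_k^*,u\ra$ for every fixed $\sigma$, so (iii) forces $\lim_k\la x_k^*,u\ra\le\beta$; and such sequences are precisely the ones that survive the infimum over $\alpha$ in the $\gsu$-formula, since a sequence with $\la x_k^*,\xb_t-x_k\ra$ bounded away from $0$ has its contribution to $\limsup_{x\to\xb_t}g^\del(x;u+\alpha(\xb_t-x))$ driven down as $\alpha$ grows. Making this precise, on $T_\mu$ one obtains $\gsu(\xb_t;u)\le\beta=g^r(\xb_t;u)+\|u\|\mu$.

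The main obstacle is exactly the matching of quantifiers in this last step: the $\gsu$-formula fixes the multiplier $\alpha$ before the near-maximizing sequence is chosen, whereas (iii) only returns a good multiplier $1/\sigma$ after---and depending on---that sequence. Carrying out the argument by contradiction, the assumption $\gsu(\xb_t;u)>\beta$ produces, for each $\alpha$, a sequence with $\la x_k^*,u\ra+\alpha\la x_k^*,\xb_t-x_k\ra\to L(\alpha)>\beta$; feeding it into the translated (iii) and subtracting yields $(1/\sigma-\alpha)\limsup_k\la x_k^*,\xb_t-x_k\ra\le\beta-L(\alpha)<0$, which only pins the sign of $\limsup_k\la x_k^*,\xb_t-x_k\ra$ rather than closing immediately. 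The delicate point is therefore to show that the sequences for which $\la x_k^*,\xb_t-x_k\ra\not\to 0$ (necessarily with unbounded subgradients) cannot keep $\gsu$ above $\beta$ once the infimum over $\alpha$ is taken; when $g$ is locally Lipschitz this is automatic, since the subgradients are bounded and $\la x_k^*,\xb_t-x_k\ra\to 0$, so the choice $\alpha=0$ already gives the contradiction. Once $\gsu(\xb_t;u)\le g^r(\xb_t;u)+\|u\|\mu$ holds on $T_\mu$ for every $\mu=1/n$, intersecting the full-measure sets $T_{1/n}$ gives $\gsu(\xb_t;u)= g^r(\xb_t;u)$ for almost all $t$; together with the structural step this places $g$ in $\AUSS(X)$.
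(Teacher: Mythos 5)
Your structural step is correct and coincides with the paper's: conditions (i)--(ii) make $t\mapsto g(\xb+tu)$ ACG$_*$ via the observation preceding Fact \ref{diff-ACG}, so $g\in\LACG_*(X)$; continuity along the segment activates the starred arrow of \eqref{diagram}, so $g^r(\xb_t;u)\le\gu(\xb_t;u)\le\gsu(\xb_t;u)$ there and only the a.e.\ inequality $\gsu\le g^r$ needs proof. Your translation of (iii) --- writing $w=\xb_t+\sigma u$ and dividing by $\sigma$ to get, for each sequence, some $\sigma>0$ with $\limsup_k\bigl(\la x_k^*,u\ra+\sigma^{-1}\la x_k^*,\xb_t-x_k\ra\bigr)\le g^r(\xb_t;u)+\mu\|u\|$ --- is also exactly the paper's computation (with $\alpha=1/\tau$), as is the final intersection $T=\bigcap_n T_{1/n}$.

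However, the proof is not complete. The passage from the sequential statement (for every sequence $((x_k,x_k^*))_k\subset\del g$ with $x_k\to\xb_t$ there exists $\alpha>0$ such that $\limsup_k\la x_k^*,u+\alpha(\xb_t-x_k)\ra\le\beta$) to the bound $\inf_{\alpha>0}\limsup_{x\to\xb_t}g^\del(x;u+\alpha(\xb_t-x))\le\beta$, and hence via Fact \ref{nat-subdiff} to $\gsu(\xb_t;u)\le\beta$, is the entire analytic content of the proposition, and you explicitly stop short of it: your contradiction argument is abandoned at the point where it ``only pins the sign of $\limsup_k\la x_k^*,\xb_t-x_k\ra$'', and your resolution is supplied only ``when $g$ is locally Lipschitz''. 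An eds function is an arbitrary extended-real-valued lsc function, so the bounded-subgradient shortcut is unavailable precisely in the cases that make the statement nontrivial; as written, the general case is asserted but not proved. For comparison, the paper performs this passage in one sentence (``Setting $\alpha:=1/\tau>0$, we derive that\ldots''), i.e.\ it treats the interchange of quantifiers as immediate; you have correctly isolated the one step that carries all the weight, but naming the obstacle is not the same as removing it. A complete argument must either justify the interchange directly --- for instance by a diagonal construction that exploits the fact that (iii) applies to \emph{every} sequence, including sequences mixing near-maximizers for different values of $\alpha$ --- or invoke a sequential characterization of $\gsu$ from \cite{Las18a} that is not stated in the present paper.
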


\begin{proof}
Let $g:X\to\Rex$ be $\del$-essentially directionally smooth on $\Omega$.
Let $\xb\in\Omega\cap\dom g$ and $u\in X$ so that
$\xb+u\in\Omega\cap\dom g$. We apply the above definition with the
pair $(\xb,\xb+u)$ in lieu of $(u,v)$.
For $t\in [0,1]$, we set $\xb_t:=\xb+tu$ and $g_{\xb,u}(t):=g(\xb_t)$.
The conditions (i) and (ii) imply that $t\mapsto g_{\xb,u}(t)$ is
ACG$_*$ on $[0,1]$ (see the observation before Fact \ref{diff-ACG}), hence $g$ belongs to \LACG$_*$(X).
Let $\mu>0$. By (iii),
there exists a subset $T_\mu \subset [0, 1{[}$ of full Lebesgue measure such that
for every $t\in T_\mu$ and every sequence $((x_n,x^*_n))_n \subset \del g$
with $x_n \to \xb_t$ there is some $w\in {]}\xb_t, \xb+u]$ for which
\begin{equation}\label{eds1}
\limsup_{n\to \infty} \la x^*_n, w-x_n\ra \le \|w-\xb_t\| \left (\|u\|^{-1}
g^r_{\xb,u}(t;1)+\mu\right ).
\end{equation}
Since $w\in {]}\xb_t, \xb+u]$ there exists $\tau>0$ such that $w=\xb_t+\tau u$.
Then \eqref{eds1} can be rewritten as
$$
\limsup_{n\to\infty}\, \la x^*_n,\tau u+\xb_t-x_n\ra \le \tau\|u\|
\left (\|u\|^{-1}g^r(\xb_t;u)+\mu\right )=\tau (g^r(\xb_t;u)+\mu\|u\|).
$$
Hence, for any $t \in T_\mu$ and  every $((x_n,x^*_n))_n \subset \del g$
with $x_n \to \xb_t$ there is $\tau>0$ such that
\begin{equation*}
\limsup_{n\to\infty}\, \la x^*_n, u+\frac{\xb_t-x_n}{\tau}\ra \le
g^r(\xb_t;u)+\mu\|u\|.
\end{equation*}
Setting $\alpha:=1/\tau>0$, we derive that 
for every $\mu>0$ there exists a subset $T_\mu \subset [0, 1{[}$
of full Lebesgue measure such that for every $t\in T_\mu$,
\begin{equation*}
\inf_{\alpha> 0}\limsup_{x\to \xb_t} g^\del (x; u +\alpha (\xb_t-x))\le g^r(\xb_t;u)+\mu\|u\|.
\end{equation*}
Consider the subset $T \subset [0, 1{[}$
of full Lebesgue measure defined by $T:=\bigcap_{n\in \N^*} T_{1/n}$.
Then, for every $t\in T$ it holds
\begin{equation}\label{eds2}
\inf_{\alpha> 0}\limsup_{x\to \xb_t} g^\del (x; u +\alpha (\xb_t-x))\le
g^r(\xb_t;u).
\end{equation}
By Fact \ref{nat-subdiff}, the left-hand side of \eqref{eds2} is
equal to $\gsu(\xb_t;u)$. Hence, $\gsu(\xb_t;u)= g^r(\xb_t;u)$
for almost all $t\in [0,1]$. A fortiori, $\gu(\xb_t;u)= g^r(\xb_t;u)$
for almost all $t\in [0,1]$. The proof is complete. 
\end{proof}

\subsection{Continuous variant}\label{variant}

When the functions are \textit{continuous}, a more refined subdifferential
determination property can be established with a simpler proof.
Let $G\subset X$ be a nonempty open convex subset.
We denote by $\CLACG(G)$ the class of all real-valued Continuous functions on $G$
whose restrictions to Line segments $[a,b]\subset G$ are ACG, and
we consider its subclass of densely almost upper semismooth functions
defined by:
\vspace*{-0.2cm}
\begin{multline*}
\CLACG^{\natural{ad}}(G)=
\{ g\in \CLACG(G)\tq 
\forall [\xb,\xb+u]\subset G\ \exists \xb_n\to \xb,\, u_n\to u : 
\forall n, \\
g^r(\xb_n+tu_n;u_n)\in\R \text{ and }
\gu(\xb_n+tu_n;u_n)= g^r(\xb_n+tu_n;u_n) \text{ for almost all } t\in [0,1]\}.
\end{multline*}

\begin{theorem}[Subdifferential Determination -- Continuous variant]
\label{subdiffdetcont}
Let $X$ be a Banach space. The class
$\CLACG^{\natural{ad}}(G)$ is subdifferentially determined in $\CLACG(G)$.
\end{theorem}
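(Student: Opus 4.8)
The plan is to follow the template of Theorem \ref{determination-revisited}, exploiting continuity to bypass the density and effective-domain arguments needed in the lower semicontinuous case. Let $g\in\CLACG^{\natural{ad}}(G)$ and $f\in\CLACG(G)$, and let $\Omega\subset X$ be open convex with $\Omega\cap\dom f\ne\emptyset$; since the functions live on $G$, I set $U:=\Omega\cap G$, an open convex set on which both $f$ and $g$ are real-valued, and reinterpret the hypothesis and the conclusion on $U$. Assuming $\del f(x)\subset\del g(x)$ for all $x\in U$, the goal is to show $f=g+{\rm const}$ on $U$. The symmetric form of the target lets me reduce to the one-sided estimate
\begin{equation*}
f(\yb)-f(\xb)\le g(\yb)-g(\xb)\qquad\text{for all }\xb,\yb\in U,
\end{equation*}
and then swap $\xb$ and $\yb$ to upgrade it to equality, whence $f-g$ is constant on the connected set $U$.

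To prove the one-sided estimate, I fix $\xb\ne\yb$ in $U$, put $u:=\yb-\xb$, and invoke the defining property of $\CLACG^{\natural{ad}}(G)$ on the segment $[\xb,\xb+u]\subset U\subset G$ to obtain $\xb_n\to\xb$, $u_n\to u$ such that $g^r(\xb_n+tu_n;u_n)$ is finite and $\gu(\xb_n+tu_n;u_n)=g^r(\xb_n+tu_n;u_n)$ for almost all $t\in[0,1]$. As $[\xb,\xb+u]$ is compact and contained in the open set $U$, a routine compactness argument gives $[\xb_n,\xb_n+u_n]\subset U$ for all large $n$; I fix such an $n$ and write $\xb_{n,t}:=\xb_n+tu_n$. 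On $U$ the inclusion $\del f\subset\del g$ yields $f^\del(x;\cdot)\le g^\del(x;\cdot)$ pointwise, and since the regularization formula of Fact \ref{nat-subdiff} only involves points near $\xb_{n,t}$ (hence in $U$), taking infima and limsups preserves the inequality and gives $\fu(\xb_{n,t};u_n)\le\gu(\xb_{n,t};u_n)$. Because $f$ is continuous, Fact \ref{nat-subdiv}\,(2) applies and gives $f^r_+(\xb_{n,t};u_n)\le\fu(\xb_{n,t};u_n)$; combining this with the semismoothness of $g$ I obtain
\begin{equation*}
f^r_+(\xb_{n,t};u_n)\le g^r(\xb_{n,t};u_n)\in\R\qquad\text{for almost all }t\in[0,1].
\end{equation*}

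With $\varphi_n(t):=f(\xb_{n,t})$ and $\gamma_n(t):=g(\xb_{n,t})$ on $[0,1]$, the preceding display reads $D^+\varphi_n(t)\le D_+\gamma_n(t)$ almost everywhere, with $D_+\gamma_n$ finite almost everywhere. Since $D^+(\varphi_n-\gamma_n)\le D^+\varphi_n-D_+\gamma_n$ wherever the right-hand side is defined, I deduce $D_+(\varphi_n-\gamma_n)(t)\le D^+(\varphi_n-\gamma_n)(t)\le 0$ for almost all $t$. Both $\varphi_n$ and $\gamma_n$ are restrictions of $\CLACG(G)$-functions to a segment, hence continuous and ACG on $[0,1]$, so their difference is continuous and ACG; the Monotonicity Theorem \ref{monotone-test}\,(3) then makes $\varphi_n-\gamma_n$ nonincreasing, giving $(\varphi_n-\gamma_n)(1)\le(\varphi_n-\gamma_n)(0)$, that is $f(\xb_n+u_n)-f(\xb_n)\le g(\xb_n+u_n)-g(\xb_n)$. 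Letting $n\to\infty$ and using the continuity of $f$ and $g$ together with $\xb_n\to\xb$ and $\xb_n+u_n\to\yb$ yields the one-sided estimate; applying the same argument to the pair $(\yb,\xb)$ (for which $g$ retains its defining property on the reversed segment) gives the reverse inequality, hence equality and the conclusion $f=g+{\rm const}$ on $U$.

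The principal points requiring care are (i) the transfer of the inclusion $\del f\subset\del g$ into $\fu\le\gu$ through the formula of Fact \ref{nat-subdiff}, which is legitimate precisely because that formula is local and the whole segment $[\xb_n,\xb_n+u_n]$ sits inside the open set $U$, and (ii) the two appeals to continuity — through the hypothesis $f(\xb_{n,t})=\liminf_{s\searrow0}f(\xb_{n,t}+su_n)$ of Fact \ref{nat-subdiv}\,(2), which is automatic here, and in the final passage $n\to\infty$, which is exactly the move unavailable in the lower semicontinuous setting and the reason this continuous variant admits a shorter proof. By contrast, the ACG-closedness of $\varphi_n-\gamma_n$ and the inclusion $[\xb_n,\xb_n+u_n]\subset U$ for large $n$ are routine. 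I expect the transfer step (i) to be the genuine crux, since it is where the approximating segments supplied by the \emph{dense} almost upper semismoothness of $g$ must be matched against the global subdifferential inclusion before the limit can be taken.
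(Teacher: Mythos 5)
Your proof is correct and follows essentially the same route as the paper's: the paper likewise picks the approximating segments $[\xb_n,\xb_n+u_n]$ supplied by the dense almost upper semismoothness of $g$, runs the argument of Theorem \ref{determination-revisited}\,(3) on each to get $f(\xb_n+u_n)-f(\xb_n)\le g(\xb_n+u_n)-g(\xb_n)$, and passes to the limit using continuity. Your additional care about $[\xb_n,\xb_n+u_n]\subset\Omega$ for large $n$ and the final symmetrization are details the paper leaves implicit, not deviations.
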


\begin{proof}
Let $g\in \CLACG^{\natural{ad}}(G)$ and $f\in \CLACG(G)$,
and let $\Omega\subset G$ be a nonempty open convex subset.
Assume
\begin{equation*}
\del f(x)\subset \del g(x) \quad \mbox{for all } x\in\Omega.
\end{equation*}
Let $\xb,\xb+u\in \Omega$.
Let $\xb_n\to \xb$ and $u_n\to u$ such that
$\gu(\xb_n+tu_n;u_n)= g^r(\xb_n+tu_n;u_n)\in\R$ for almost all $t\in [0,1]$.
Proceeding as in Theorem \ref{determination-revisited}\,(3),
we derive that, for every $n\in \N$,
\begin{equation*}
f^r_+(\xb_n+tu_n;u_n)\le g^r(\xb_n+tu_n;u_n)\in\R
\text{ for almost all $t\in [0,1]$,}
\end{equation*}
which leads to
$$
f(\xb_n+u_n)-f(\xb_n)\le g(\xb_n+u_n)-g(\xb_n)
\text{ for every }n\in \N.
$$
Since $f$ and $g$ are continuous,
passing to the limit, we get $f(\xb+u)-f(\xb)\le g(\xb+u)-g(\xb)$.
It then follows that $f-g$ is constant on $\Omega$. The proof is complete.
\end{proof}

Now, let $\mathcal{S}_e(G)$ denote the class of
\textit{essentially smooth} functions studied by Borwein-Moors
\cite{BM98a,BM98b},
that is, the locally Lipschitz functions $g$ on $G$ such that 
for each $u\in X$,
\begin{equation}\label{defessmooth}
B_u:=\{ x\in G\tq g^\circ(x;u)\ne -g^\circ(x;-u)\}\ \text{is a
\textit{Haar-null subset} of } X.
\end{equation}

\begin{proposition}[$\mathcal{S}_e(G)$ versus $\CLACG^{\natural{ad}}(G)$]
\label{BMclass}
Let $G\subset X$ be a nonempty open convex subset of a Banach space $X$.
Each $g \in \mathcal{S}_e(G)$ satisfies: for every $u\in X$
there is a dense subset $D_u$ of $G$ such that for every $w\in D_u$
with $[w,w+u]\subset G$,
$g^{\natural\natural}(w+tu;u)= g^r(w+tu;u)$ for almost all $t\in [0,1]$.
Hence,
$\mathcal{S}_e(G)\subset \CLACG^{\natural{ad}}(G)$.
\end{proposition}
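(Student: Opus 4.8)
The plan is to translate the Haar-nullity of each $B_u$ into a statement about one-dimensional sections of $B_u$ taken in the fixed direction $u$, and then to exploit the fact that a Haar-null set has dense complement. The subtlety is that Haar-nullity only provides \emph{some} test measure, whereas I want information about lines in a \emph{prescribed} direction; a Fubini argument lets the same test measure do both jobs.

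First I would record what happens off $B_u$. Since $g$ is locally Lipschitz on $G$, the line diagram \eqref{diagramlip} gives, at any $x\in G$, that $g^r(x;u)\le\gu(x;u)\le\gsu(x;u)=g^\circ(x;u)$ with all terms finite. By definition, $x\notin B_u$ means $g^\circ(x;u)=-g^\circ(x;-u)$, which is exactly strict differentiability of $g$ at $x$ in the direction $u$ (the characterization recalled in the Introduction) and yields $g^\circ(x;u)=g^r(x;u)$. Hence at every $x\in G\setminus B_u$ one has $\gsu(x;u)=g^r(x;u)\in\R$, and a fortiori $\gu(x;u)=g^r(x;u)$. It therefore suffices to produce, for each $u$, a dense $D_u\subset G$ such that for every $w\in D_u$ with $[w,w+u]\subset G$ one has $w+tu\notin B_u$ for almost all $t\in[0,1]$.

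Next I would set up the Fubini step. Fix $u$, let $\mu$ be a Borel probability measure witnessing the Haar-nullity of $B_u$ (so $\mu(B_u+x)=0$ for all $x\in X$), and set $\Phi(w):=\int_0^1\mathbf{1}_{B_u}(w+tu)\,dt$, the measure of the parameter set where $[w,w+u]$ hits $B_u$. For every $z\in X$, Tonelli's theorem gives
\begin{equation*}
\int_X\Phi(w+z)\,d\mu(w)=\int_0^1\Big(\int_X\mathbf{1}_{B_u}(w+z+tu)\,d\mu(w)\Big)\,dt=\int_0^1\mu(B_u-z-tu)\,dt=0 .
\end{equation*}
As $\Phi\ge 0$, this forces $\mu(\{w:\Phi(w+z)>0\})=0$ for every $z$; writing $N:=\{w\in X:\Phi(w)>0\}$, this reads $\mu(N+x)=0$ for all $x\in X$. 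Thus $N$ is itself Haar-null, \emph{with the very same measure $\mu$} (the measurability of $\Phi$ and the use of Tonelli are routine once $B_u$ is taken, without loss of generality, to be Borel). I expect this to be the crux: one does not need a test measure supported on the line $\R u$, because the fixed $\mu$ already forces the exceptional set $N$ to be Haar-null.

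Finally, by the standard fact that a Haar-null set has dense complement, $D_u:=G\setminus N$ is dense in $G$; and for $w\in D_u$ we have $\Phi(w)=0$, i.e.\ $w+tu\notin B_u$ for almost all $t$, whence $\gsu(w+tu;u)=g^r(w+tu;u)$ for almost all $t\in[0,1]$ by the first step. This proves the displayed property. For the inclusion $\mathcal{S}_e(G)\subset\CLACG^{\natural{ad}}(G)$, observe that a locally Lipschitz $g$ is continuous on $G$ and Lipschitz, hence AC, on every segment $[a,b]\subset G$, so $g\in\CLACG(G)$. Given $[\xb,\xb+u]\subset G$, the set $\{w:[w,w+u]\subset G\}$ is open (by compactness of $[0,1]u$ in the open set $G$) and contains $\xb$; picking $\xb_n\in D_u$ inside this set with $\xb_n\to\xb$ and setting $u_n:=u$, the previous paragraph yields, for every $n$, that $g^r(\xb_n+tu;u)\in\R$ and $\gu(\xb_n+tu;u)=g^r(\xb_n+tu;u)$ for almost all $t\in[0,1]$, which is precisely the membership condition defining $\CLACG^{\natural{ad}}(G)$.
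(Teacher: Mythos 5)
Your proof is correct, but it takes a different route from the paper at the key step. The paper simply invokes its Fact \ref{null-th} (the Zaj\'{\i}\v{c}ek/Borwein--Moors section lemma): it decomposes $X=W_u\oplus\mathrm{span}\{u\}$, gets a dense set $S_u\subset W_u$ of base points whose lines in direction $u$ meet $B_u$ in a Lebesgue-null set, and takes $D_u:=S_u+\mathrm{span}\{u\}$. You instead re-derive the needed special case of that lemma from scratch: the Tonelli computation showing that the set $N=\{w:\Phi(w)>0\}$ of bad base points is Haar-null \emph{for the same witness measure} $\mu$, combined with the fact that a Haar-null set has dense complement. This is essentially the internal mechanism of the cited lemma, so your argument is self-contained where the paper's is modular; what you lose is only that your $D_u$ controls the section over $[0,1]$ rather than over all of $\R$, which is all that is needed here. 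The reduction step is identical in both proofs: off $B_u$ the function is strictly differentiable in the direction $u$, so $g^\circ(x;u)=g^r(x;u)$ and the Lipschitz diagram \eqref{diagramlip} gives $\gsu(x;u)=\gu(x;u)=g^r(x;u)\in\R$. Two small points you rightly flag as routine but should keep in mind: $B_u$ is indeed Borel (it is $F_\sigma$, since $x\mapsto g^\circ(x;u)+g^\circ(x;-u)$ is upper semicontinuous and nonnegative for locally Lipschitz $g$), and the ``dense complement'' step uses that the test measure can be taken tight, so that some translate of any ball has positive $\mu$-measure; both are standard in the Borwein--Moors framework.
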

\begin{proof}
Let $g \in \mathcal{S}_e(G)$, $0\ne u \in X$, and let $W_u$
be a topological complement of span$\{u\}$.
Applying Fact \ref{null-th} given below to the Borel Haar null set $B_u$
defined in \eqref{defessmooth}, we obtain that there is a dense set
$S_u$ in $W_u$ such that for every $w\in S_u$,
\begin{equation}\label{defesgen}
g^\circ(w+tu;u)= -g^\circ(w+tu;-u)\mbox{ for almost all $t\in \R$.}
\end{equation}
Then the set $D_u:=S_u+\text{span$\{u\}$}$ is a dense subset of $G$
such that for every $w\in D_u$ the relation \eqref{defesgen} holds, or equivalently (see the discussion at the beginning of
Subsection \ref{auss}),
\begin{equation*}
g^\circ(w+tu;u)= g^r(w+tu;u)\mbox{ for almost all $t\in \R$,}
\end{equation*}
which implies (see the diagram \eqref{diagram})
\begin{equation*}
g^{\natural\natural}(w+tu;u)=g^{\natural}(w+tu;u)
= g^r(w+tu;u)\mbox{ for almost all $t\in [0,1]$.}
\end{equation*}
In particular, $g\in \CLACG^{\natural{ad}}(G)$.
\end{proof}

\begin{fact}[{\cite[Lemma 2.4]{Zaj12} and \cite[Theorem 2.4]{BM98b}}]
\label{null-th}
Let $X$ be a Banach space, $0\ne u \in X$, and let $W$ be a topological
complement of {\rm span}$\{u\}$. Let $B\subset X$ be a Borel Haar-null set.
Then there exists a set $S \subset W$ dense in $W$ such that the set
$\{t \in\R : w + tu \in B\}$ is Lebesgue-null for each $w\in S$.
\end{fact}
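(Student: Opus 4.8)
The plan is to unwind the definition of Haar-nullity into the existence of a transverse test measure, run a Fubini computation across the direction $u$, and then promote the resulting ``almost every line'' conclusion to a genuinely \emph{dense} family of good lines filling all of $W$. The delicate point will be the last step, since $W$ is not assumed separable and the natural pushforward measure on $W$ need not have full support.

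First I would recall that $B$ being Borel Haar-null means there is a tight (Radon) Borel probability measure $\mu$ on $X$, with nonempty and separable support $K:=\mathrm{supp}\,\mu$, such that $\mu(z+B)=0$ for every $z\in X$. The one structural observation I need is that translates of test measures are test measures: for $w_0\in X$, the measure $\mu_{w_0}:=\mu(\cdot-w_0)$ satisfies $\mu_{w_0}(z+B)=\mu((z-w_0)+B)=0$ for all $z$, and $\mathrm{supp}\,\mu_{w_0}=K+w_0$. This translation freedom is what will drive the density argument.

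Next I would fix a test measure and carry out the transverse Fubini step. Let $P:X\to W$ be the bounded linear projection along $\mathrm{span}\{u\}$. For each fixed $t\in\R$ the $t$-slice of the set $\{(x,t):x+tu\in B\}$ equals $B-tu$, and $\mu(B-tu)=\mu((-tu)+B)=0$. As $(x,t)\mapsto x+tu$ is continuous, that set is Borel, so Tonelli applies on each strip $X\times[-N,N]$ and gives $\int_X\lambda\big(\{t\in[-N,N]:x+tu\in B\}\big)\,d\mu(x)=0$; letting $N\to\infty$ yields a $\mu$-conull set $X_0\subset X$ such that the fibre $\{t\in\R:x+tu\in B\}$ is Lebesgue-null for every $x\in X_0$. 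Since the line through $x$ in direction $u$ coincides with the line through $P(x)$, this fibre is a translate of the fibre over $P(x)$, whence $P(x)\in G$ where
\[
G:=\{w\in W:\lambda(\{t\in\R:w+tu\in B\})=0\}.
\]
Thus $P(X_0)\subset G$, and $G$ depends only on $B$ and $u$, not on the chosen measure.

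The main obstacle is that $P(X_0)$ is only conull for the image measure of $\mu$ on $W$, which may be concentrated on a small subset; by itself this does not make $G$ dense in all of $W$. Here I would use the translation trick: fix $\kappa\in K$, let $\bar w\in W$ and $\eps>0$ be arbitrary, and set $w_0:=\bar w-P(\kappa)\in W$. Applying the Fubini step above to the test measure $\mu_{w_0}$ produces a $\mu_{w_0}$-conull set $X_0'$ with $P(X_0')\subset G$. Now $\kappa+w_0\in\mathrm{supp}\,\mu_{w_0}$ and $P(\kappa+w_0)=P(\kappa)+w_0=\bar w$, so the ball $O_\delta:=\{x\in X:\|x-(\kappa+w_0)\|<\delta\}$ has positive $\mu_{w_0}$-measure and hence meets the conull set $X_0'$; choosing $x_\delta\in O_\delta\cap X_0'$ gives $P(x_\delta)\in G$ with $\|P(x_\delta)-\bar w\|\le\|P\|\,\delta$. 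Taking $\delta<\eps/\|P\|$ shows $G$ meets every neighbourhood of every $\bar w\in W$, so $G$ is dense in $W$. Setting $S:=G$ finishes the proof, since each $w\in S$ has Lebesgue-null fibre $\{t:w+tu\in B\}$ by the very definition of $G$.
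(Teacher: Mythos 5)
The paper does not prove this statement at all: it is imported as a known \emph{Fact}, with the proof deferred to \cite[Lemma 2.4]{Zaj12} and \cite[Theorem 2.4]{BM98b}. Your argument is a correct, self-contained reconstruction of the standard proof underlying those references: a Tonelli computation against a Radon test measure $\mu$ witnessing Haar-nullity shows that the line through $\mu$-almost every point in the direction $u$ meets $B$ in a Lebesgue-null set (and this property only depends on the projection of the point onto $W$, since the two fibres are translates of each other); then the observation that every translate $\mu(\cdot-w_0)$ is again a test measure lets you place a support point of the measure over any prescribed $\bar w\in W$, and positive-measure balls around that support point must meet the conull set of good points, which gives density of $G$ in $W$. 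The only step you pass over is the joint measurability of $\{(x,t): x+tu\in B\}$ required for Tonelli: when $X$ is non-separable, the Borel $\sigma$-algebra of $X\times\R$ can be strictly larger than $\mathcal{B}(X)\otimes\mathcal{B}(\R)$, so one should first use tightness of $\mu$ to restrict to a closed separable subspace $Y$ containing $u$ and carrying all of $\mu$, where the two $\sigma$-algebras coincide and the computation goes through verbatim. This is a routine repair and does not affect the structure or the correctness of your proof.
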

{\small

\begin{thebibliography}{10}

\bibitem{BD05}
J.~Benoist and A.~Daniilidis.
Subdifferential representation of convex functions: refinements and
  applications.
{\em J. Convex Anal.}, 12(2):255--265, 2005.

\bibitem{BTZ05}
F.~Bernard, L.~Thibault, and D.~Zagrodny.
Integration of primal lower nice functions in {H}ilbert spaces.
{\em J. Optim. Theory Appl.}, 124(3):561--579, 2005.

\bibitem{BQ93}
J.~R. Birge and L.~Q. Qi.
Semiregularity and generalized subdifferentials with applications to
  optimization.
{\em Math. Oper. Res.}, 18(4):982--1005, 1993.

\bibitem{Bor91}
J.~M. Borwein.
Minimal {CUSCOS} and subgradients of {L}ipschitz functions.
In {\em Fixed point theory and applications ({M}arseille, 1989)},
  volume 252 of {\em Pitman Res. Notes Math. Ser.}, pages 57--81. Longman Sci.
  Tech., Harlow, 1991.

\bibitem{BM97}
J.~M. Borwein and W.~B. Moors.
Essentially smooth {L}ipschitz functions.
{\em J. Funct. Anal.}, 149(2):305--351, 1997.

\bibitem{BM98a}
J.~M. Borwein and W.~B. Moors.
A chain rule for essentially smooth {L}ipschitz functions.
{\em SIAM J. Optim.}, 8(2):300--308, 1998.

\bibitem{BM98b}
J.~M. Borwein and W.~B. Moors.
Null sets and essentially smooth {L}ipschitz functions.
{\em SIAM J. Optim.}, 8(2):309--323, 1998.

\bibitem{Cla83}
F.~H. Clarke.
{\em Optimization and nonsmooth analysis}.
Canadian Mathematical Society Series of Monographs and Advanced
  Texts. John Wiley \& Sons Inc., New York, 1983.
A Wiley-Interscience Publication.

\bibitem{CJ89}
R.~Correa and A.~Jofr{\'e}.
Tangentially continuous directional derivatives in nonsmooth
  analysis.
{\em J. Optim. Theory Appl.}, 61(1):1--21, 1989.

\bibitem{Gor94}
R.~A. Gordon.
{\em The integrals of {L}ebesgue, {D}enjoy, {P}erron, and
  {H}enstock}, volume~4 of {\em Graduate Studies in Mathematics}.
American Mathematical Society, Providence, RI, 1994.

\bibitem{Hen68}
R.~Henstock.
A {R}iemann-type integral of {L}ebesgue power.
{\em Canad. J. Math.}, 20:79--87, 1968.

\bibitem{Iof12}
A.~D. Ioffe.
On the theory of subdifferentials.
{\em Adv. Nonlinear Anal.}, 1(1):47--120, 2012.

\bibitem{IV08}
M.~Ivanov and N.~Zlateva.
A new proof of the integrability of the subdifferential of a convex
  function on a {B}anach space.
{\em Proc. Amer. Math. Soc.}, 136(5):1787--1793, 2008.

\bibitem{JL02}
F.~Jules and M.~Lassonde.
Formulas for subdifferentials of sums of convex functions.
{\em J. Convex Anal.}, 9(2):519--533, 2002.

\bibitem{JL13}
F.~Jules and M.~Lassonde.
Subdifferential estimate of the directional derivative, optimality
  criterion and separation principles.
{\em Optimization}, 62(9):1267--1288, 2013.

\bibitem{Las18a}
M.~Lassonde.
Links between subderivatives and subdifferentials.
{\em J. Math. Anal. Appl.}, 457(2):1478--1491, 2018.

\bibitem{Las18b}
M.~Lassonde.
Upper semismooth functions and the subdifferential determination
  property.
{\em Set-Valued Var. Anal.}, 26(1):95--109, 2018.

\bibitem{Mif77a}
R.~Mifflin.
An algorithm for constrained optimization with semismooth functions.
{\em Math. Oper. Res.}, 2(2):191--207, 1977.

\bibitem{Mif77b}
R.~Mifflin.
Semismooth and semiconvex functions in constrained optimization.
{\em SIAM J. Control Optimization}, 15(6):959--972, 1977.

\bibitem{Mor65}
J.-J. Moreau.
Proximit\'e et dualit\'e dans un espace hilbertien.
{\em Bull. Soc. Math. France}, 93:273--299, 1965.

\bibitem{Pen13}
J.-P. Penot.
{\em Calculus without derivatives}, volume 266 of {\em Graduate Texts
  in Mathematics}.
Springer, New York, 2013.

\bibitem{Pol91}
R.~A. Poliquin.
Integration of subdifferentials of nonconvex functions.
{\em Nonlinear Anal.}, 17(4):385--398, 1991.

\bibitem{Qi89}
L.~Q. Qi.
The maximal normal operator space and integration of subdifferentials
  of nonconvex functions.
{\em Nonlinear Anal.}, 13(9):1003--1011, 1989.

\bibitem{Qi90}
L.~Q. Qi.
Semismoothness and decomposition of maximal normal operators.
{\em J. Math. Anal. Appl.}, 146(1):271--279, 1990.

\bibitem{Roc70b}
R.~T. Rockafellar.
On the maximal monotonicity of subdifferential mappings.
{\em Pacific J. Math.}, 33:209--216, 1970.

\bibitem{Roc82}
R.~T. Rockafellar.
Favorable classes of {L}ipschitz-continuous functions in subgradient
  optimization.
In {\em Progress in nondifferentiable optimization}, volume~8 of {\em
  IIASA Collaborative Proc. Ser. CP-82}, pages 125--143. Internat. Inst. Appl.
  Systems Anal., Laxenburg, 1982.

\bibitem{Spi81}
J.~E. Spingarn.
Submonotone subdifferentials of {L}ipschitz functions.
{\em Trans. Amer. Math. Soc.}, 264(1):77--89, 1981.

\bibitem{Tay73}
P.~D. Taylor.
Subgradients of a convex function obtained from a directional
  derivative.
{\em Pacific J. Math.}, 44:739--747, 1973.

\bibitem{Thi00}
L.~Thibault.
Limiting convex subdifferential calculus with applications to
  integration and maximal monotonicity of subdifferential.
In {\em Constructive, experimental, and nonlinear analysis (Limoges,
  1999)}, volume~27 of {\em CMS Conf. Proc.}, pages 279--289. Amer. Math. Soc.,
  Providence, RI, 2000.

\bibitem{TZ95}
L.~Thibault and D.~Zagrodny.
Integration of subdifferentials of lower semicontinuous functions on
  {B}anach spaces.
{\em J. Math. Anal. Appl.}, 189(1):33--58, 1995.

\bibitem{TZ05}
L.~Thibault and D.~Zagrodny.
Enlarged inclusion of subdifferentials.
{\em Canad. Math. Bull.}, 48(2):283--301, 2005.

\bibitem{TZ10}
L.~Thibault and D.~Zagrodny.
Subdifferential determination of essentially directionally smooth
  functions in {B}anach space.
{\em SIAM J. Optim.}, 20(5):2300--2326, 2010.

\bibitem{TZla05}
L.~Thibault and N.~Zlateva.
Integrability of subdifferentials of directionally {L}ipschitz
  functions.
{\em Proc. Amer. Math. Soc.}, 133(10):2939--2948 (electronic), 2005.

\bibitem{Zaj12}
L.~Zaj{\'{\i}}{\v{c}}ek.
Generic {F}r\'echet differentiability on {A}splund spaces via a.e.\
  strict differentiability on many lines.
{\em J. Convex Anal.}, 19(1):23--48, 2012.

\end{thebibliography}
}
\end{document}